\definecolor{bcP}{rgb}{1,0,1}
\theoremstyle{plain} 
\newtheorem{Theorem}{Theorem}
\newtheorem{Lemma}[Theorem]{Lemma}
\newtheorem{Proposition}[Theorem]{Proposition}
\newtheorem{Corollary}[Theorem]{Corollary}
\theoremstyle{definition}
\newtheorem{Definition}[Theorem]{Definition}
\theoremstyle{remark}
\newtheorem{Remark}[Theorem]{Remark}
\newcommand{\A}{{\mathcal{A}}}
\newcommand{\kS}{{\mathcal{S}}}
\newcommand{\I}{{\kI}}
\newcommand{\kB}{{\mathcal{B}}}
\newcommand{\kC}{\mathcal{C}}
\newcommand{\kL}{\mathcal{L}}
\newcommand{\kI}{\mathcal{I}}
\title{Alternatives for the $q$-matroid axioms of independent spaces, bases, and spanning spaces}
\author[1]{Michela Ceria}
\affil[1]{\small 
Dept. of Mechanics, Mathematics \& Management, Politecnico di Bari,  Via Orabona 4 - 70125 Bari - Italy; michela.ceria@gmail.com}
\author[2]{Relinde Jurrius}
\affil[2]{\small Faculty of Military Sciences, Netherlands Defence Academy, The Netherlands; rpmj.jurrius@mindef.nl}
\begin{document}

\maketitle
\begin{abstract}
It is well known that in $q$-matroids, axioms for independent spaces, bases, and spanning spaces differ from the classical case of matroids, since the straightforward $q$-analogue of the classical axioms does not give a $q$-matroid. For this reason, a fourth axiom has been proposed. In this paper we show how we can describe these spaces with only three axioms, providing two alternative ways to do that. As an application, we show direct cryptomorphisms between independent spaces and circuits and between independent spaces and bases. \textcolor{red}{This version contains corrections to the published version.}
\end{abstract}

\section{Introduction}

The study of $q$-matroids originates in \cite{crapo1964theory}, but has been re-discovered in \cite{JP18} because of its link to network coding. Many notions in network coding are $q$-analogues of notions associated to error-correcting codes in the Hamming metric. Generally speaking, a $q$-analogue in combinatorics is a generalisation from a finite set to a finite dimensional vector space. The $q$-analogue of codes in the Hamming metric are codes in the rank metric. The $q$-analogue of a combinatorial design, called a subspace design, is a special case of a code in the subspace metric. Both types of codes are of interest for network coding. An overview of the foundational work in this area can be found in \cite{greferath2018network}.

Just as matroids are related to codes and designs, their $q$-analogues are related as well. As mentioned, $q$-(poly)matroids are a generalisation of rank-metric codes \cite{GLJ,gorla2019rank,JP18,shiromoto19}. Furthermore, $q$-matroids can be used to find new (weighted) subspace designs \cite{BCIR21,WINEpaper1}. These applications motivate the study of $q$-matroids from a theoretical point of view. \\

In the classical case of matroids independent sets need three axioms. For $q$-matroids, the straightforward $q$-analogue of these three axioms are not strong enough to get a $q$-matroid with a semimodular rank function. Therefore, a fourth axiom was added (see \cite{JP18}). \\
This unexpected fourth axiom raises some questions. Why would we need this extra axiom for independent spaces, bases, and spanning spaces, but not for things like dependent spaces, circuits, flats, and hyperplanes? Can't we find a better way to describe the axioms for independent spaces, bases, and spanning spaces, using only three axioms? \\
In this paper we present a positive answer to this question. We propose two ways to define independent spaces, bases, and spanning spaces with only three axioms.  The first one is to remove the third axiom, because it is implied by the fourth one (where we have to be a bit careful for independent spaces to pick the right variation of the fourth axiom). The second one is an alternative for the third axiom that is still a $q$-analogue of the classical case, but that obliterates the need for the fourth axiom. \\
As an application of this restriction of the number of axioms, we prove two cryptomorphisms. The first one is a direct cryptomorphism between independent sets and circuits that was not shown before. The second one is a cryptomorphism between independent spaces and bases. This was done in \cite{JP18}, but we believe there was a gap in the proof that we will fix here.

\section{Preliminaries}

Throughout this paper, $n$ denotes a fixed positive integer and $E$ a fixed $n$-dimensional vector space over an arbitrary field $\mathbb{F}$. The notation $\mathcal{L}(E)$ indicates the \emph{lattice of subspaces} of $E$. For any $A,B\in\mathcal{L}(E)$ with $A\subseteq B$ we denote by $[A,B]$ the interval between $A$ and $B$, that is, the lattice of all subspaces $X$ with $A\subseteq X\subseteq B$. For $A\subseteq E$ we use the notation $\mathcal{L}(A)$ to denote the interval $[\{0\},A]$. For more background on lattices, see for example Birkhoff \cite{birkhoff}. \\

We use the following definition of a $q$-matroid.

\begin{Definition}\label{rankfunction}
A $q$-matroid $M$ is a pair $(E,r)$ where $r$ is an integer-valued  function defined on the subspaces of $E$ with the following properties:
\begin{itemize}
\item[(R1)] For every subspace $A\in \kL(E)$, $0\leq r(A) \leq \dim A$. 
\item[(R2)] For all subspaces $A\subseteq B \in \kL(E)$, $r(A)\leq r(B)$. 
\item[(R3)] For all $A,B\in \kL(E)$, $r(A+ B)+r(A\cap B)\leq r(A)+r(B)$.  
\end{itemize}
The function $r$ is called the \emph{rank function} of the $q$-matroid and the vector space $E$ is called the \emph{ground space} of the $q$-matroid. 
\end{Definition}

In order to make notation more compact, we use the following ways to describe families of subspaces.
 
\begin{Definition}\label{def:families}
Let $\A \subseteq \kL(E)$. We define the following family of subspaces of $E$:
   \begin{align*}
       \max(\A)&:=\{ X \in \A : X \nsubseteq A \text{ for any } A \in \A, A \neq X \}, \\ 
       \min(\A)&:=\{ X \in \A :  A \nsubseteq X \text{ for any } A \in \A, A \neq X \}.
   \end{align*}
For any subspace $X \in \kL(E)$, we define then the collection of \emph{maximal subspaces
	of $X$ in $\A$} to be the collection of subspaces
	\[
	\max(X,\A):=\{ A \in \A : A \subseteq X \text{ and } B \subset X, B \in \A \implies \dim(B) \leq \dim(A) \}.
	\]
	In other words, $\max(X,\A)$ is the set of subspaces of $X$ in $\A$ that have maximal dimension over all such choices of subspaces. Similarly, we define the \emph{minimal subspaces containing $X$ in $\A$} to be the collection of subspaces
	\[
	\min(X,\A):=\{ A \in \A : X \subseteq A \text{ and } X \subset B, B \in \A \implies \dim(B) \geq \dim(A) \}.
	\]
Finally, by slight abuse of notation, we write
\[ X\cap\A := \{X\cap A:A\in\A\}. \]
\end{Definition}

We define several specific subspaces in a $q$-matroid.

\begin{Definition}\label{independentspaces}
Let $M=(E,r)$ be a $q$-matroid. 
A subspace $A$ of $E$ is called an \emph{independent} space of $M$ if $r(A)=\dim A$. A subspace that is not an independent space is called a \emph{dependent space}. A minimal dependent space (w.r.t. inclusion) is called a \emph{circuit}. A \emph{spanning space} of $M$ is a subspace $S$ such that $r(S)=r(E)$. A \emph{loop} of $M$ is a $1$-dimensional subspace $\ell\subseteq E$ such that $r(\ell)=0$.
\end{Definition}

A $q$-matroid can be equivalently defined by its independent spaces, bases, spanning spaces and circuits. See \cite{bcj} for an overview of these cryptomorphic definitions and many others.

\begin{Definition}\label{independence-axioms}
	Let $\I \subseteq \mathcal{L}(E)$. We define the following \emph{independence axioms}.
	\begin{itemize}
		\item[(I1)] $\I\neq\emptyset$.
		\item[(I2)] For all $I,J \in \kL(E)$, if $J\in\I$ and $I\subseteq J$, then $I\in\I$.
		\item[(I3)] For all $I,J\in\I$ satisfying $\dim I<\dim J$, there exists a $1$-dimensional subspace $x\subseteq J$, $x\not\subseteq I$ such that $I+x\in\I$.
		\item[(I4)] For all $A,B \in \kL(E)$ and $I,J \in \kL(E)$ such that 
		$I \in \max(\I \cap \kL(A))$ and $J \in \max(\I \cap \kL(B))$,
		there exists $K\in \max(\I \cap \kL(A+B))$ such that $K \subseteq I+J$.
	\end{itemize}
	If $\I$ satisfies the independence axioms (I1)-(I4) we say that $(E,\I)$ is a collection of \emph{independent spaces}.
\end{Definition}

\begin{Definition}\label{indep-bases}
Let $\mathcal{B} \subseteq \kL(E)$.
We define the following \emph{basis axioms}.
\begin{itemize}
\item[(B1)] $\mathcal{B}\neq\emptyset$
\item[(B2)] For all $B_1,B_2\in\mathcal{B}$, if $B_1\subseteq B_2$, then $B_1=B_2$.
\item[(B3)] For all $B_1,B_2\in\mathcal{B}$ and for every subspace $A$ of codimension 1 in $B_1$ satisfying $B_1\cap B_2\subseteq A$, there is a $1$-dimensional subspace $y$ of $B_2$ such that $A+y\in\mathcal{B}$.
\item[(B4)] 
For all $A,B\in \kL(E)$ and $I,J\in\kL(E)$ such that $I\in\max(E,A\cap\kB)$ and $J\in\max(E,B\cap\kB)$, there exists $K\in\max(E,(A+B)\cap\kB)$ such that $K\subseteq I+J$.
\end{itemize}
If $\mathcal{B}$ satisfies the bases axioms (B1)-(B4) we say that $(E,\mathcal{B})$ is a collection of \emph{bases}.
\end{Definition}

\begin{Definition}\label{spanning-axioms}
	Let $\kS \subseteq \mathcal{L}(E)$. We define the following \emph{spanning space axioms}.
	\begin{itemize}
		\item[(S1)] $E \in \kS$.
		\item[(S2)] For all $I,J \in \kL(E)$, if $J\in\kS$ and $J \subseteq I$, then  $I\in\kS$.
		\item[(S3)] For all $I,J\in\kS$ such that $\dim J<\dim I$, there exists some $X \in \kL(E)$ of codimension $1$ in $E$ satisfying $J \subseteq X$, $I \nsubseteq X$, and $I\cap X\in\kS$.
		\item[(S4)] For all $A,B\in\kL(E)$ and $I,J\in \kL(E)$ such that $I\in\min(\kS\cap[A,E])$ and $J\in\min(\kS\cap[B,E])$, there exists $K\in\min(\kS\cap [A\cap B,E])$ such that $I\cap J\subseteq K$.		
	\end{itemize}
	If $\kS$ satisfies the independence axioms (S1)-(S4) we say that $(E,\kS)$ is a collection of \emph{spanning spaces}.
\end{Definition}

\begin{Definition}\label{circuit-axioms}
Let $\mathcal{C}\subseteq\mathcal{L}(E)$. We
define the following \emph{circuit axioms}.
\begin{itemize}
\item[(C1)] $\{0\}\notin\mathcal{C}$.
\item[(C2)] For all $C_1,C_2\in\mathcal{C}$, if $C_1\subseteq C_2$  $C_1=C_2$.
\item[(C3)]  For distinct $C_1,C_2 \in \kC$ and any $X\in \kL(E)$ of codimension $1$ there is a circuit $C_3 \in \kC$ such that $C_3 \subseteq (C_1+C_2)\cap X$.
\end{itemize}
If $\kC$ satisfies the circuit axioms (C1)-(C3), we say that $(E,\mathcal{C})$ is a collection of \emph{circuits}.
\end{Definition}
Note that the axiom (C3) listed here is different from the axiom (C3) as defined in \cite[Theorem 64]{JP18}. An explanation of this can be found in \cite[Section 11]{bcj}.

A lattice isomorphism between a pair of lattices $(\kL_1,\leq_1,\vee_1,\wedge_1)$ and $(\kL_2,\leq_2,\vee_2,\wedge_2)$ is a bijective function $\varphi:\kL_1\longrightarrow\kL_2$ that is order-preserving and preserves the meet and join, that is, for all $x,y\in\kL_1$ we have that $\varphi(x\wedge_1 y)=\varphi(x)\wedge_2\varphi(y)$ and $\varphi(x\vee_1 y)=\varphi(x)\vee_2\varphi(y)$. A lattice anti-isomorphism between a pair of lattices is a bijective function $\psi:\kL_1\longrightarrow\kL_2$ that is order-reversing and interchanges the meet and join, that is, for all $x,y\in\kL_1$ we have that $\psi(x\wedge_1 y)=\psi(x)\vee_2\psi(y)$ and $\psi(x\vee_1 y)=\psi(x)\wedge_2\psi(y)$.  
We hence define a notion of equivalence  and duality between $q$-matroids. 

\begin{Definition}
Let $E_1,E_2$ be vector spaces over the same field $\mathbb{F}$. Let $M_1=(E_1,r_1)$ and $M_2=(E_2,r_2)$ be $q$-matroids. We say that $M_1$ and $M_2$ are \emph{lattice-equivalent} or \emph{isomorphic} if there exists a lattice isomorphism $\varphi:\kL(E_1)\longrightarrow \kL(E_2)$ such that $r_1(A)=r_2(\varphi(A))$ for all $A\subseteq E_1$. In this case we write $M_1 \cong M_2$.
\end{Definition}
 
Fix an anti-isomorphism $\perp:\kL(E)\longrightarrow\kL(E)$ that is an involution. For any subspace $X \in \kL(E)$ we denote by $X^\perp$ the \emph{dual} of $X$ in $E$ with respect to $\perp$.

Important operations on $q$-matroids are restriction, contraction and duality. We give a short summary here and refer to \cite{BCIR21,JP18} for details.

\begin{Definition}\label{defdual}
Let $M=(E,r)$ be a $q$-matroid. Then $M^*=(E,r^*)$ is also a $q$-matroid, called the \emph{dual $q$-matroid}, with rank function
\[ r^*(A)=\dim(A)-r(E)+r(A^\perp). \]
\end{Definition}

\begin{Theorem}[\cite{JP18}, Theorem 45]
The subspace $B$ is a basis of $M$ if and only if $B^\perp$ is a basis of $M^*$.
\end{Theorem}

\begin{Definition}\label{restr}
Let $M=(E,r)$ be a $q$-matroid. The \emph{restriction} of $M$ to a subspace $X$ is the $q$-matroid $M|_X$ with ground space $X$ and rank function $r_{M|_X}(A)=r_M(A)$. \\
The \emph{contraction} of $M$ of a subspace $X$ is the $q$-matroid $M/X$ with ground space $E/X$ and rank function $r_{M/X}(A/X)=r_M(A)-r_M(X)$.
\end{Definition}

\section{Redundancy in the axiom systems}\label{redundancy}

In this section we show that (I3), (B3) and (S3) are implied by the other axioms. In case of (I4) this is actually a bit subtle, since the exact statement of (I4) has a somewhat vague history.

\subsection{A discussion on variations of (I4)}\label{discussionI4}

The axiom (I4) was first stated in \cite{JP18}. It was formulated using the ambiguous term ``maximal independent space inside $A$'' for some $A\subseteq E$. It was not clarified if this maximality was taken with respect to inclusion or dimension. However, if one carefully reads the proofs in \cite{JP18}, especially Proposition 15, it becomes clear that maximality is taken with respect to dimension. Intuitively this also follows from the fact that for the cryptomorphism between independence and rank the following rank function in terms of independence is defined:
\[ r_\kI(A)=\max\{\dim I:I\in\kI,I\subseteq A\}. \]
In following papers, notably \cite{bcj}, the ambiguity in (I4) was solved by assuming maximality was taken with respect to inclusion. This did not lead to any problems, since by (I3), both notions are equivalent. It is only in cases where (I3) is not assumed (or proven) that the difference in maximality matters.

In this section we discuss the relations between the following variations of the axiom (I4).

\begin{itemize}
\item[(oI4)]  For all $A,B \in \kL(E)$ and $I,J \in \kL(E)$ such that $I \in \max(A, \kI )$ and $J \in \max(B, \kI)$, there exists $K\in \max(A+B, \kI)$ such that $K \subseteq I+J$.
\item[(I4)] For all $A,B \in \kL(E)$ and $I,J \in \kL(E)$ such that $I \in \max(\I \cap \kL(A))$ and $J \in \max(\I \cap \kL(B))$, there exists $K\in \max(\I \cap \kL(A+B))$ such that $K \subseteq I+J$.
\item[(I4')] Let $A \in \kL(E)$ and let $I \in \max(A,\I)$. Let $B \in \kL(E)$. 
Then there exists $J \in \max(A+B,\I)$ such that  $J \subseteq I+B$.
\item[(I4'')] Let $A \in \kL(E)$ and let $I \in \max(A,\I)$. 
Let $x \in \kL(E)$ be a $1$-dimensional space. Then there exists $J \in \max(x+A,\I)$ such that $J \subseteq x+I$.
\end{itemize}

We use the notation (oI4) for the version of (I4) as implied in \cite{JP18} and we reserve (I4) for the version that appears in other papers. The following relation among these alternatives of (I4) were proven.

\begin{Theorem}[Theorem 26 of \cite{bcj}]\label{oldI4s}
Let $\kI$ be a collection of subspaces satisfying (I1)-(I3).
Then the axiom systems (I1)-(I4), (I1)-(I4') and (I1)-(I4'') are pairwise equivalent.
\end{Theorem}

The next result is a variation of this theorem, involving (oI4) instead of (I4) and not depending on (I3). In fact, this proof is already implicit in the proof of \cite[Theorem 26]{bcj}. This result will be of use in the next section.

\begin{Proposition}\label{newI4s}
Let $\kI$ be a collection of subspaces satisfying (I1) and (I2). Then the axiom systems (I1), (I2), (oI4); (I1), (I2), (I4'); and (I1), (I2), (I4'') are pairwise equivalent.
\end{Proposition}
\begin{proof}
In \cite[Theorem 26]{bcj} it is proven that $\kI$ satisfies (I4') if and only if it satisfies (I4''): the axiom (I3) is not used  there. It is straightforward that if $\kI$ satisfies (oI4) then it satisfies (I4'). For the implication in the other direction, exactly the same proof as in \cite[Theorem 26]{bcj} holds and this does not use (I3). In fact, what is proven there is that if $\kI$ satisfies (I4'), then it satisfies (oI4).
\end{proof}

\begin{Remark}\label{I4isDifferent}
By the previous discussion, it is now clear that the axiom (I4) is weaker with respect to the other variations.
Indeed, if we observe (oI4), as well as (I4') and (I4''), they actually don't need (I3) in proving their equivalence.
Things change if we want to prove their equivalence with (I4): in that proof (I3) becomes crucial. This will be made clearer in the next section.
\end{Remark}

\subsection{Redundancy of (I3)}

In this section we prove that the axiom (I3) is redundant, provided we use any variant of the fourth independence axiom that is not (I4). We do this by showing that given (I1) and (I2), the axioms (I3) and (I4) together are equivalent to the axiom (I4'') (or (I4') or (oI4)).

\begin{Theorem}
Let $(E,\kI)$ be a $q$-matroid. Then, for the set $\kI$, the axiom (oI4) holds true.
\end{Theorem}
\begin{proof}
This is a direct consequence of Proposition \ref{newI4s}.
\end{proof}

\begin{Theorem}\label{I3isImplByI4}
Let $E$ be a vector space and let $\tilde{\kI}$ be a collection of subspaces satisfying the axioms (I1), (I2) and (oI4). Then $(E,\tilde{\kI})$ is a $q$-matroid.
\end{Theorem}
\begin{proof}
We have to show that $\tilde{\kI}$ satisfies the axioms (I1), (I2), (I3) and (I4). The first two axioms are satisfied by definition and (I4) follows from Proposition \ref{newI4s} and Theorem \ref{oldI4s}. So it is left to prove (I3). \\
Let $I,J\in \tilde{\kI}$ with $\dim I<\dim J$. Assume, towards a contradiction, that for all $1$-dimensional spaces $x\subseteq J$, $x\not\subseteq I$ we have that $I+x\notin\tilde{\kI}$. Let $x_i$ be $1$-dimensional spaces in $J$ such that we can write $I+J=I\oplus x_1\oplus\cdots\oplus x_h$. Note that, by (I2), $x_i\in\tilde{\kI}$ for all $i$. \\
By Proposition \ref{newI4s}, $\tilde{\kI}$ satisfies (I4''). We apply (I4'') to $I$ and $x_1$: there is a maximal member of $\tilde{\kI}$ (w.r.t. dimension) contained in $I+x_1$. By assumption, $I+x_i\notin \tilde{\kI}$, so $I$ is such a maximal member of $\tilde{\kI}$ in $I+x_1$. Next, we apply (I4'') to $I+x_1$ and $x_2$: there is a maximal member of $\tilde{\kI}$ in $I+x_1+x_2$ contained in $I+x_2$. Again by assumption, $I+x_2\notin \tilde{\kI}$ so $I$ is such a maximal member of $\tilde{\kI}$. Continuing like this, we find that $I$ is a maximal member of $\tilde{\kI}$ in $I+x_1+\cdots+x_h=I+J$. However, this is a contradiction, since $J\subseteq I+J$, $J \in \tilde{\kI}$, and $\dim I<\dim J$. We conclude that (I3) has to hold and thus $(E,\tilde{\kI})$ is a $q$-matroid.
\end{proof}

The proof of (I3) is similar to Proposition 6 of \cite{JP18}, only written in terms of independence instead of rank. Also, note that the last part of this proof does not hold with (I4) instead of (I4''). This supports Remark \ref{I4isDifferent}.

\subsection{Redundancy of (B3) and (S3)}

For the basis axioms we have a similar result. Even though it is not specified, it is implied that in (B4) a maximal intersection of a space with a basis is an intersection of maximal dimension. Therefore, the subtleties we had with (I4) and (oI4) distinguishing between maximal w.r.t. inclusion and w.r.t. dimension, do not appear for bases.

\begin{Theorem}\label{B3isImplByB4}
Let $\kB$ be a family of subspaces of $E$ that satisfies the axioms (B1), (B2) and (B4). Then $\kB$ satisfies (B3).
\end{Theorem}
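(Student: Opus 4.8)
The plan is to reduce the statement to Theorem~\ref{I3isImplByI4} by passing to the associated family of independent spaces. Put
\[
\kI := \{\, I \in \kL(E) : I \subseteq B \text{ for some } B \in \kB \,\}.
\]
First I would record the routine facts: $\kI$ satisfies (I1), because (B1) gives $\kB \neq \emptyset$ and hence $\{0\}\in\kI$; $\kI$ satisfies (I2) by construction; and, using (B2), $\kB$ is exactly the set of inclusion-maximal members of $\kI$ (a member of $\kB$ lies in $\kI$ and, by (B2), cannot be properly contained in another member of $\kI$, while any inclusion-maximal $I\in\kI$ lies in some $B\in\kB\subseteq\kI$ and so equals it).

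The substantive step is to show $\kI$ satisfies (I4), and this is where (B4) is used, essentially as a translation. For fixed $X\in\kL(E)$ one has $\{\,B\cap X:B\in\kB\,\}\subseteq\kI\cap\kL(X)$, and every $I\in\kI\cap\kL(X)$ satisfies $I\subseteq B\cap X$ for some $B\in\kB$; so the two families are mutually cofinal and therefore have the same inclusion-maximal elements. Hence $\max(\kI\cap\kL(X))$ is precisely the set of maximal intersections of members of $\kB$ with $X$ appearing in (B4), and under this identification axiom (I4) for $\kI$ is literally axiom (B4) for $\kB$.

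Now Theorem~\ref{I3isImplByI4} applies and gives that $\kI$ satisfies (I3). From (I3) I would extract that all inclusion-maximal members of $\kI$ have a common dimension $d$: if $I,I'$ were both inclusion-maximal with $\dim I<\dim I'$, then (I3) applied to $(I,I')$ would enlarge $I$ within $\kI$, contradicting maximality. By the identification above this means $\dim B=d$ for every $B\in\kB$; and since every $I\in\kI$ lies in some basis, any $I\in\kI$ with $\dim I=d$ is automatically inclusion-maximal, hence a basis.

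Finally I would deduce (B3). Given $B_1,B_2\in\kB$ and $A$ of codimension $1$ in $B_1$, we have $A\in\kI$ by (I2) and $\dim A=d-1<d=\dim B_2$; applying (I3) to the pair $(A,B_2)$ yields a $1$-dimensional $y\subseteq B_2$ with $y\not\subseteq A$ and $A+y\in\kI$, whence $\dim(A+y)=d$ and so $A+y\in\kB$ by the previous paragraph. This is exactly the conclusion of (B3). I expect the only delicate points to be the cofinality argument identifying maximal independent subspaces with maximal intersections of bases, and the equidimensionality of $\kB$; the rest is bookkeeping. (Note that the hypothesis $B_1\cap B_2\subseteq A$ of (B3) turns out not to be needed for this argument.)
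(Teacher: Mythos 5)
Your proof is correct, but it takes a genuinely different route from the paper's. The paper proves (B3) directly, mirroring its proof of Theorem~\ref{I3isImplByI4}: assuming $A+x\notin\kB$ for every $1$-dimensional $x\subseteq B_2$, it iterates (B4) along a decomposition $A+B_2=A\oplus x_1\oplus\cdots\oplus x_h$ to conclude that $A$ is a maximal intersection of a basis with $A+B_2$, which contradicts $B_2\subseteq A+B_2$. You instead pass to the downward closure $\kI$ of $\kB$, check that (B4) for $\kB$ translates into (I4) for $\kI$ (via the mutual cofinality of $\{B\cap X:B\in\kB\}$ and $\kI\cap\kL(X)$), invoke Theorem~\ref{I3isImplByI4} as a black box to get (I3), and then read off (B3) using the equidimensionality of $\kB$. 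Both arguments are sound, and yours buys two things: it avoids redoing the iteration, and it makes the equidimensionality of bases explicit --- a fact the paper's proof uses silently when it asserts $\dim A<\dim B_2$ (which needs $\dim B_1\leq\dim B_2$). One point to flag: the paper's Definition~\ref{def:families} defines $\max(\mathcal{A})$ by inclusion, while its proofs and (B4) work with maximality with respect to dimension; your cofinality argument identifies the two relevant families under either reading, so nothing breaks, but you should state which convention you are matching when you say (I4) for $\kI$ ``is literally'' (B4) for $\kB$. Your remark that the hypothesis $B_1\cap B_2\subseteq A$ is not needed for the conclusion is also correct.
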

\begin{proof}
Let $B_1,B_2\in \kB$ and let $A\subseteq B_1$ be a codimension $1$ subspace such that $B_1\cap B_2\subseteq A$. Assume, towards a contradiction, that for all $1$-dimensional spaces $x\subseteq B_2$ we have that $A+x\notin\kB$. Since $B_1\cap B_2\subseteq A$, we will never have that $A+x=B_1$.
Let $x_i$ be $1$-dimensional spaces in $B_2$ such that we can write $A+B_2=A\oplus x_1\oplus\cdots\oplus x_h$. Note that $\max(E,x_i\cap\kB)=\{x_i\}$, since $x_i\subseteq B_2$.

Now apply (B4) to $A$ and $x_1$: there is a $J_1\in\max(E,(A+x_1)\cap\kB)$ such that $J_1\subseteq A+x_1$.
Since by assumption, $A+x_1\notin\kB$, we can take $J_1=A$.
Next, we apply (B4) to $A+x_1$ and $x_2$: there is a $J_2\in\max(E,(A+x_1+x_2)\cap\kB)$ such that $J_2\subseteq A+x_2$. Again by assumption, $A+x_2\notin\kB$ so we can take $J_2=A$. Continuing like this, we find that $A\in\max(E,(A+x_1+\cdots+x_h)\cap\kB)=\max(E,(A+B_2)\cap\kB)$. However, this is a contradiction, since $B_2\subseteq A+B_2$ and $\dim A<\dim B_2$. We conclude that (B3) has to hold.
\end{proof}

We finish with the result for spanning spaces, that can be proven by taking the dual arguments to the proofs for independent spaces.

\begin{Theorem}\label{S3isImplByS4}
Let $\kS$ be a family of subspaces of $E$. Define the following spanning axiom.
\begin{itemize}
\item[(oS4)] For all $A,B\in\kL(E)$ and $I,J\in \kL(E)$ such that $I\in\min(A,\kS )$ and $J\in\min(B, \kS)$, there exists $K\in\min(A\cap B, \kS)$ such that $I\cap J\subseteq K$.	
\end{itemize}
If $\kS$ satisfies the axioms (S1), (S2) and (oS4), then $\kS$ satisfies (S3).
\end{Theorem}

\section{An alternative for the axiom (I3)}

In this section, we propose a new version for the axiom (I3),  that we will call (nI3) and we will prove that it subsumes both the (I3) and  the (I4) axioms for a $q$-matroid. 

\subsection{Motivation}

Before we state the axiom (nI3), we will give some motivation for this statement. Let us look at a small example. Let $E=\mathbb{F}_2^2$ and let $x$, $y$ and $z$ be the three $1$-dimension spaces of $E$. If we let $\kI=\{x,\{0\}\}$, we have a family satisfying the axioms (I1), (I2) and (I3), but not (I4). The latter can be seen by applying (I4) to $y$ and $z$: they are both not in $\kI$, so a maximal member of $\kI$ should be inside $\{0\}$. However, this is a contradiction because $x\subseteq y+z$ and $x\in\kI$. \\
Define a rank function for all $A\subseteq E$ as $r(A)=\dim  (  \max\{I\subseteq A:I\in\kI\} 
) $. Then the rank function in our example is not semimodular, i.e., does not satisfy axiom (R3):
\[ r(y+z)+r(y\cap z)=r(E)+r(0)=1+0>r(y)+r(z)=0. \]
We want the rank function to be that of a $q$-matroid. How can we achieve this with little change to $\kI$? Note that we can still ask $z\notin\kI$ if then we let $y\in\kI$. This gives a mixed diamond, as explained in Appendix A.3 of \cite{ceriajurrius2021}.

\begin{Lemma}\label{LoopsInSameSubs}
If a $q$-matroid $M$ has loops, then they are exactly all $1$-dimensional subspaces of a subspace $L\subseteq E$.
\end{Lemma}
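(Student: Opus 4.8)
The plan is to let $L$ be the span of all loops of $M$ and to prove that $r(L)=0$. Once this is established, monotonicity immediately shows every $1$-dimensional subspace of $L$ is a loop, while every loop lies in $L$ by construction, so the set of loops is exactly the set of $1$-dimensional subspaces of $L$.

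First I would record that $r(\{0\})=0$, which is immediate from (R1). The key step is the following: if $F\in\kL(E)$ satisfies $r(F)=0$ and $\ell$ is a loop, then $r(F+\ell)=0$. This follows by applying (R3) to $F$ and $\ell$:
\[
r(F+\ell)+r(F\cap\ell)\leq r(F)+r(\ell)=0,
\]
and since $r(F+\ell)\geq 0$ by (R1) and $r(F\cap\ell)\geq 0$ by (R1), we get $r(F+\ell)=0$. Now, because $E$ is finite-dimensional, $L$ is a sum of finitely many loops $\ell_1,\dots,\ell_k$; starting from $F=\ell_1$ (which has rank $0$ since it is a loop) and adding $\ell_2,\dots,\ell_k$ one at a time, the key step gives $r(\ell_1+\cdots+\ell_k)=0$, i.e. $r(L)=0$. (If $M$ has a loop then $L\neq\{0\}$, matching the hypothesis.)

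Finally, for any $1$-dimensional subspace $x\subseteq L$, axiom (R2) gives $r(x)\leq r(L)=0$ and (R1) gives $r(x)\geq 0$, hence $r(x)=0$ and $x$ is a loop; conversely every loop is contained in $L$ by definition of $L$. Thus the loops of $M$ are precisely the $1$-dimensional subspaces of $L$. The only point requiring any care is the inductive step preserving rank $0$ under adding loops; apart from that the argument is a direct application of the rank axioms, so I expect no real obstacle.
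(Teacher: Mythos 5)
Your proof is correct and follows essentially the same route as the paper: show the sum of loops has rank $0$ by iterating a two-space step, then use (R2) to conclude every $1$-dimensional subspace of that sum is a loop. The only difference is that you derive the step $r(F+\ell)=0$ directly from (R3) and (R1), whereas the paper simply cites Lemma 11 of \cite{JP18} for the statement that the sum of two loops has rank $0$; your version is just a self-contained rendering of the same argument.
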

\begin{proof}
This is a direct consequence of Lemma 11 in \cite{JP18}, that says that if $x$ and $y$ are loops, then $x+y$ has rank $0$. Applying this iteratively, we find that the sum of any number of loops has rank $0$. Then axiom (r2) implies that all $1$-dimensional subspaces of this sum of loops have rank $0$, hence are loops themselves.
\end{proof}

\begin{Definition}\label{Loopspace}
The subspace of $E$ containing all loops is called the \emph{loop space} of $M$. We usually denote it by $L$.
\end{Definition}

What goes wrong in our small example has to do with the loop space. If you have one $1$-dimensional space that is independent, then all other $1$-dimensional spaces that are not in the loop space $L$ have to be independent. This applies to other dimensions as well, by applying contraction. \\
Let $I,J\in\kI$ with $\dim I<\dim J$. Then (I3) tells us that there is a $1$-dimensional space $x\subseteq J$, $x\not\subseteq I$ such that $I+x\in\kI$. If we consider $M/I$, we find that $(I+x)/I$ is a $1$-dimensional independent space in $M/I$. So, outside the loop space $L$ of $M/I$, all $1$-dimensional spaces in $M/I$ have to be independent. Since not every $1$-dimensional space in $M/I$ is a loop, the space $L$ has at least codimension $1$ in $E/I$. \\
Now we will translate this to $M$. The independent $1$-dimensional spaces $(I+x)/I$ in $M/I$ correspond to $1$-dimensional spaces $x$ outside the space $L\oplus I:=X$ of codimension $1$ in $E$, and for all of them, $I+x$ has to be independent. We summarise this in a proposed new axiom (nI3).

\begin{Definition}\label{nuovoI3}
Let $E$ be a vector space and $\mathcal{I}$ a family of subspaces. We define the following property (axiom) of $\mathcal{I}$.
\begin{itemize}
\item[(nI3)] For all $I,J\in\kI$ satisfying $\dim I<\dim J$, there exists a codimension $1$ subspace $X\subseteq E$ with $I\subseteq X$, $J\not\subseteq X$ such that $I+x\in\kI$ for all $1$-dimensional $x\subseteq E$, $x\not\subseteq X$.
\end{itemize}
\end{Definition}

\begin{Remark}\label{vnI3impliesI3}
Because $J\not\subseteq X$, there is an $x\subseteq J$ such that $x\not\subseteq X$ and thus $I+x\in\kI$. This shows that (nI3) implies (I3). Also, (nI3) becomes (I3) in the classical case, since there is only one element $x$ outside $X$ that, by construction, is in $J$.
\end{Remark}

Looking back at the small example we started with, we see that letting $x\in\kI$ would imply, by applying (nI3) to $0$ and $x$, that at least on of $y$ and $z$ should also be in $\kI$.

\subsection{The independence axioms are equivalent to (I1), (I2), (nI3)}

In this section we prove that the axiom system (I1), (I2), (I3), (I4) is equivalent to the axiom system (I1), (I2), (nI3). First we show that the axioms (I1), (I2), (I3) and (I4), together, imply the new axiom (nI3).

\begin{Theorem}
Let $(E, \kI)$ be a $q$-matroid. Then, for the set $\kI$, the axiom (nI3) holds true.
\end{Theorem}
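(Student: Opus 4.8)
The plan is to show that in a $q$-matroid given by its family of independent spaces $\kI$ (so in particular (I1), (I2), (I3), (I4) all hold), the axiom (nI3) holds. Let $I, J \in \kI$ with $\dim I < \dim J$. The motivation text already tells us what the codimension-$1$ subspace $X$ should be: we should take $X = L_I \oplus I$, where $L_I$ is (the preimage in $E$ of) the loop space of the contraction $M/I$. So the first step is to make this precise: pass to $M/I$, identify its loop space $L'$ inside $E/I$, and pull it back to a subspace $\hat L \subseteq E$ containing $I$; then set $X := \hat L$. I would first argue that $X$ has codimension exactly $1$ in $E$. This is where (I3) enters: since $\dim I < \dim J$, axiom (I3) gives a $1$-dimensional $x \subseteq J$ with $x \not\subseteq I$ and $I + x \in \kI$, so $(I+x)/I$ is a nonloop $1$-dimensional space of $M/I$; hence $L' \neq E/I$, i.e. $\codim_{E/I} L' \geq 1$. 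For the reverse inequality I would invoke Lemma~\ref{LoopsInSameSubs} applied to $M/I$: the loops of $M/I$ are exactly the $1$-dimensional subspaces of $L'$, and I need that $L'$ has codimension \emph{at most} $1$, equivalently that at most a hyperplane's worth of $1$-spaces can be dependent in $M/I$.

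The heart of the argument is therefore: \emph{if $u, v$ are two $1$-dimensional subspaces of $E/I$ that are both nonloops of $M/I$ (i.e. $I+u, I+v \in \kI$ via their preimages), then every $1$-dimensional subspace of $u + v$ other than possibly those lying in some fixed line is also a nonloop}, and more sharply, that the set of loops forms a subspace of codimension $\leq 1$. Concretely I want to show: the dependent $1$-dimensional subspaces of $E/I$ form a subspace. Combined with Lemma~\ref{LoopsInSameSubs} (loops form a subspace) and the existence of at least one nonloop, one gets that either the loop space is all of $E/I$ (impossible, we just excluded it) or there is a complementary structure forcing codimension $\leq 1$ — here I would actually use (I4) in $M/I$ directly. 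Apply (I4) in $M/I$ to two nonloop lines $u \not\subseteq X/I$ and a third line $w$: $I+u$ and $I+u'$... more cleanly, take $A = u$, $B = v$ with $u,v$ nonloop $1$-spaces; then $\max(\kI \cap \kL(A)) = \{u\}$ (rank reasons) and similarly for $B$, and (I4) produces $K \in \max(\kI \cap \kL(u+v))$ with $K \subseteq u + v$ of dimension... If $u+v$ contains a dependent line then $\dim K$ could be $2$ or we learn $K$ is a plane, forcing all lines in it independent. Iterating this across a spanning set of nonloops shows the nonloops, together with $\{0\}$, span a complement to the loop space of dimension $\geq \dim(E/I) - 1$. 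The cleanest route, which I would pursue first, is: show the loop space $L'$ of $M/I$ is a \emph{flat} and that $E/I$ modulo $L'$ has the property that every $1$-space is independent except... — but I think the most economical proof simply reproves, in the contracted matroid, the statement "outside the loop space every $1$-space is independent," which is exactly Lemma~\ref{LoopsInSameSubs} plus the observation that a $1$-space $\bar x$ with $\bar x \not\subseteq L'$ has rank $1$, hence is independent.

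Once $X$ has codimension $1$, the rest is bookkeeping. I need: (a) $I \subseteq X$, which holds by construction since $\hat L \supseteq I$; (b) $J \not\subseteq X$, which holds because $(I+x)/I \subseteq E/I$ is a nonloop line not contained in $L'$ (from the (I3) step above, $x \subseteq J$), so $x \not\subseteq \hat L = X$ while $x \subseteq J$; and (c) for every $1$-dimensional $x \subseteq E$ with $x \not\subseteq X$, we have $I + x \in \kI$. For (c): $x \not\subseteq X = \hat L$ means the image $\bar x = (x+I)/I$ is a $1$-space of $E/I$ not contained in the loop space $L'$ of $M/I$; hence $\bar x$ is a nonloop, i.e. $r_{M/I}(\bar x) = 1$, i.e. $r_M(I + x) - r_M(I) = 1$, i.e. $r_M(I+x) = \dim I + 1 = \dim(I+x)$ (using $r_M(I) = \dim I$ since $I \in \kI$ and $x \not\subseteq I$ as $x \not\subseteq X \supseteq I$). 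Therefore $I + x$ is independent, $I + x \in \kI$.

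The main obstacle I anticipate is step two: cleanly establishing that the loop space of $M/I$ has codimension at most $1$ \emph{using only the independence-axiom data*, without secretly importing the rank function. If we are allowed to treat $(E,\kI)$ as a genuine $q$-matroid with its rank function (which the hypothesis "Let $(E,\kI)$ be a $q$-matroid" seems to license), this is immediate from the argument in the Motivation subsection and from Lemma~\ref{LoopsInSameSubs} applied to $M/I$. If instead one must argue purely axiomatically, the work is to show that the "$1$-dimensional spaces $x$ with $I + x \in \kI$" are exactly those outside a hyperplane, and for that (I4) applied to pairs of such lines inside $M/I$ (reducing to the ground-level statement that in any $q$-matroid the non-loop $1$-spaces are the complement of the loop subspace) is the right tool. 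Either way the crux is the codimension-$1$ claim; everything downstream is the short rank computation in (c).
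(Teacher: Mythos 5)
There is a genuine gap, and it sits exactly where you locate your ``crux'': the claim that the loop space $L'$ of $M/I$ has codimension at most $1$ in $E/I$ is false, so the ``heart of the argument'' is devoted to proving a statement that no amount of applying (I4) to pairs of nonloop lines can deliver. In a $q$-matroid the loop space can be \emph{any} proper subspace; it is not forced up to a hyperplane by the existence of nonloops. Concretely, take $M=U_{2,2}$ on $E=\FF_2^2$ (every subspace independent), $I=0$, $J=E$: then $M/I=M$ has loop space $\{0\}$, of codimension $2$, and your prescription $X:=\hat L=I=0$ is not a hyperplane, so it cannot serve as the $X$ in (nI3). Note also that your intermediate heuristic ``two independent lines force all lines in their span to be independent'' is not what Lemma~\ref{LoopsInSameSubs} says and is not needed; the mixed-diamond example in the paper's motivation (a rank-$1$ plane with exactly one loop line) shows the relevant dichotomy is only loop versus nonloop, governed by the loop space, whose codimension is unconstrained beyond being $\geq 1$ here.

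The repair is small and turns your argument into essentially the paper's proof: you do not need $\hat L$ itself to be a hyperplane, only that $\hat L\neq E$, which you already have from (I3) (the line $x\subseteq J$ with $I+x\in\kI$ maps to a nonloop of $M/I$, so $x\not\subseteq\hat L$ by Lemma~\ref{LoopsInSameSubs}). Then choose \emph{any} codimension-$1$ subspace $X\subseteq E$ with $\hat L\subseteq X$ and $x\not\subseteq X$; axiom (nI3) only requires that every line outside $X$ give an independent $I+x$, not that the lines inside $X$ all fail, so enlarging $\hat L$ to a hyperplane is harmless. With that change your steps (a)--(c) go through verbatim: $I\subseteq\hat L\subseteq X$, $J\not\subseteq X$ since $x\subseteq J$, and any $z\not\subseteq X$ has $z\not\subseteq\hat L$, hence $r_{M/I}((z+I)/I)=1$ and $I+z\in\kI$. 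This is the same construction as in the paper, which builds $A$ (your $\hat L$, namely $I$ plus all lines $y$ with $I+y\notin\kI$) by iterating (I4) rather than by invoking the rank function of $M/I$, and then likewise extends $A$ to a hyperplane avoiding $x$; your worry about whether the rank function may be used is unfounded, since the hypothesis that $(E,\kI)$ is a $q$-matroid licenses it, but the codimension-$1$ claim as you state it must be dropped, not proved.
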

\begin{proof}
Let $I, J\in \kI$, $\dim(I)<\dim(J)$. Consider all $1$-spaces $y$ not in $I$ such that $I\in\max\{\kI\cap\kL(I+y)\}$. Let $A$ be the sum of all such $I+y$. We claim that $I\in\max\{\kI\cap\kL(A)\}$. This can be seen by applying (I4) multiple times. Let $y_1$ and $y_2$ be such that $I \in \max\{\kI\cap \kL(I+y_1)\}$ and $I\in\max\{\kI\cap \kL(I+y_2)\}$. Then, by (I4), $I\in\max\{\kI\cap\kL((I+y_1)+(I+y_2))\}$. Iterating this argument shows that $I\in\max\{\kI\cap\kL(A)\}$ and that, moreover, all $I+y$ with $y\subseteq A$, $y\not\subseteq I$ are not in $\kI$. \\
On the other hand, for all $1$-spaces $z\subseteq E$, $z\not\subseteq A$ we have that $I+z\in\kI$. We know from (I3) that there is at least one such $z$, namely the $x\in J$, $x\not\subseteq I$ such that $I+x\in\kI$. This means that $\dim A\leq\dim E-1=n-1$. Now take a space $X$ of codimension $1$ in $E$ with $A\subseteq X$ and $x\not\subseteq X$. Then for all $z\subseteq E$, $z\not\subseteq X$ we have that $I+z\in\kI$ and this proves (nI3).
\end{proof}

Next we show that the axiom (nI3), together with (I1) and (I2), implies the axioms (I3) and (I4). Before doing that, we prove a small lemma about restriction.

\begin{Lemma}\label{restrict}
Let $E$ be a vector space and let $ \tilde{\kI}$ be a collection of subspaces of $E$ satisfying the axioms (I1), (I2) and (nI3). Let $F\subseteq E$ and let $\tilde{\kI}|_F=\{I\in\tilde{\kI}:I\subseteq F\}$. Then $\tilde{\kI}|_F$ satisfies the axioms (I1), (I2) and (nI3).
\end{Lemma}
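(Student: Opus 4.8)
The plan is to verify the three axioms (I1), (I2), (nI3) for $\tilde{\kI}|_F$ one at a time, reducing each to the corresponding property of $\tilde{\kI}$ on the ambient space $E$. The first two are immediate; all of the content sits in (nI3), where the natural candidate for the required codimension-$1$ subspace of $F$ is $X\cap F$, with $X$ the codimension-$1$ subspace of $E$ handed to us by (nI3) applied to $\tilde{\kI}$.

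For (I1): from (I1) and (I2) for $\tilde{\kI}$ one first gets $\{0\}\in\tilde{\kI}$ (pick any $J\in\tilde{\kI}$ and apply (I2) to $\{0\}\subseteq J$); since $\{0\}\subseteq F$, this gives $\{0\}\in\tilde{\kI}|_F$. For (I2): if $J\in\tilde{\kI}|_F$ and $I\subseteq J$, then $I\in\tilde{\kI}$ by (I2) for $\tilde{\kI}$, and $I\subseteq J\subseteq F$, hence $I\in\tilde{\kI}|_F$.

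For (nI3): take $I,J\in\tilde{\kI}|_F$ with $\dim I<\dim J$. Since $I,J\in\tilde{\kI}$, (nI3) for $\tilde{\kI}$ yields a codimension-$1$ subspace $X\subseteq E$ with $I\subseteq X$, $J\not\subseteq X$, and $I+x\in\tilde{\kI}$ for every $1$-dimensional $x\subseteq E$ with $x\not\subseteq X$. I would set $X':=X\cap F$ and claim it witnesses (nI3) inside $F$. Indeed $I\subseteq X$ and $I\subseteq F$ give $I\subseteq X'$. Since $J\subseteq F$ and $J\not\subseteq X$, we have $F\not\subseteq X$, so $X+F=E$ and the dimension formula gives $\dim X'=\dim F-1$, i.e.\ $X'$ has codimension $1$ in $F$; moreover $J\subseteq F$ and $J\not\subseteq X$ force $J\not\subseteq X'$. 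Finally, for any $1$-dimensional $x\subseteq F$ with $x\not\subseteq X'$ we get $x\not\subseteq X$ (otherwise $x\subseteq X\cap F=X'$), hence $I+x\in\tilde{\kI}$ by the chosen property of $X$; and $I+x\subseteq F$ since $I,x\subseteq F$, so $I+x\in\tilde{\kI}|_F$. This is precisely (nI3) for $\tilde{\kI}|_F$.

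There is no genuine obstacle here; the only points needing a moment's care are that $X\cap F$ has codimension exactly $1$ in $F$ — which is exactly what the hypothesis $J\not\subseteq X$ buys, since $J\subseteq F$ — and that the new independent space $I+x$ remains a subspace of $F$, so that it lies in $\tilde{\kI}|_F$ and not merely in $\tilde{\kI}$.
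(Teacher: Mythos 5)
Your proof is correct and follows essentially the same route as the paper: verify (I1) and (I2) directly, and for (nI3) take the codimension-$1$ subspace $X\subseteq E$ provided by (nI3) for $\tilde{\kI}$ and use $X\cap F$ as the witness in $F$, with the key observation that $J\subseteq F$ and $J\not\subseteq X$ force $F\not\subseteq X$ and hence $\dim(X\cap F)=\dim F-1$. You simply spell out the details more explicitly than the paper does.
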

\begin{proof}
It is clear that $\tilde{\kI}|_F$ satisfies (I1) (because $\{0\}\in\tilde{\kI}$) and (I2). Let $I,J\in\tilde{\kI}|_F$ with $\dim I<\dim J$. Let $X$ be the codimension $1$ space in $E$ defined by axiom (nI3). Then $F\not\subseteq X$, because $J\not\subseteq X$ and $J\subseteq F$. Therefore, $X\cap F$ is a codimension $1$ space in $F$ that satisfies (nI3).
\end{proof}

\begin{Theorem}\label{verynewI3thm}
Let $E$ be a vector space and let $\tilde{\kI}$ be a collection of subspaces satisfying the axioms (I1), (I2) and (nI3). Then $(E,\tilde{\kI})$ is a $q$-matroid.
\end{Theorem}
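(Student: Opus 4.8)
The goal is to show that a family $\tilde{\kI}$ satisfying (I1), (I2), (nI3) gives rise to a $q$-matroid. Since the equivalence between the rank axioms (R1)--(R3) and a suitable set of independence axioms is already known (this is the cryptomorphism established in \cite{JP18}, see also \cite{bcj}), the cleanest route is to show that $\tilde{\kI}$ satisfies (I1), (I2), (I3) and (I4); then the standard construction $r(A)=\dim\max\{I\subseteq A : I\in\tilde{\kI}\}$ produces a $q$-matroid. Axioms (I1) and (I2) are given, and (I3) follows from (nI3) by Remark \ref{vnI3impliesI3}. So the whole content of the proof is the implication $(\text{I1}),(\text{I2}),(\text{nI3})\implies(\text{I4})$.

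The plan is to verify (I4) directly. Fix $A,B\in\kL(E)$ and take $I\in\max(\tilde{\kI}\cap\kL(A))$, $J\in\max(\tilde{\kI}\cap\kL(B))$; we must find $K\in\max(\tilde{\kI}\cap\kL(A+B))$ with $K\subseteq I+J$. First I would reduce to the case $I\subseteq J$ or at least arrange a common starting point: by (I3) applied inside $\kL(A+B)$ we can grow $I$ to some maximal independent space $K'$ of $A+B$, but we need control over where $K'$ lives, so instead I would argue by induction on $\dim(A+B)-\dim(I+J)$, or better, on $\dim(I+J)-\dim I$. The base case $I=J$ (so in particular $\dim I=\dim J$) is handled by noting $I$ itself is maximal in $\kL(A)\cap\tilde{\kI}$ and we can try to extend within $I+J=I$; more carefully, one shows $I$ is already maximal in $\kL(A+B)$ when no $1$-space of $A+B$ outside a suitable hyperplane helps — this is exactly where (nI3) enters. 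For the inductive step, pick a $1$-dimensional $x\subseteq J$, $x\not\subseteq I$. If $I+x\in\tilde{\kI}$, replace $I$ by $I+x$ (still contained in $A+B$, still inside $I+J$) and recurse with a strictly larger $I$. If $I+x\notin\tilde{\kI}$, then by (nI3) applied to a pair witnessing $\dim I<\dim(\text{something})$... here I must be careful: (nI3) is stated for $I,J'\in\tilde{\kI}$ with $\dim I<\dim J'$, and it produces a hyperplane $X\supseteq I$ with $I+z\in\tilde{\kI}$ for every $1$-space $z\not\subseteq X$; since $I+x\notin\tilde{\kI}$ we must have $x\subseteq X$. Running over all $1$-spaces $x\subseteq J$ with $I+x\notin\tilde{\kI}$, each such $x$ lies in the hyperplane $X=X(I,J)$ from (nI3) (this hyperplane depends only on $I$ and $\dim J$, and in fact one can fix a single $X$ working for the pair $(I,J)$). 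Thus either some $x\subseteq J$ has $I+x\in\tilde{\kI}$ — and we recurse — or $J\subseteq X$, contradicting $J\not\subseteq X$ from (nI3). Hence we can always strictly increase $\dim I$ while staying inside $I+J\subseteq A+B$, until we reach a $K\supseteq I$, $K\subseteq I+J$, with $\dim K=\dim J$ or $K$ maximal; iterating until no $1$-space of $J$ (now of $I+J$) outside the relevant hyperplane extends $K$ gives a $K$ that is maximal in $\kL(A+B)\cap\tilde{\kI}$ and satisfies $K\subseteq I+J$.

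The main obstacle is the bookkeeping in the last step: after enlarging $I$ to $K\subseteq I+J$ we must argue that $K$ is maximal \emph{in all of $A+B$}, not merely maximal among independent subspaces of $I+J$. This is where Lemma \ref{restrict} is useful: restrict $\tilde{\kI}$ to $F=A+B$, so we may assume $A+B=E$ from the start, and then maximality of $K$ in $\kL(E)\cap\tilde{\kI}$ must be shown. Suppose $K$ is not maximal; then there is an independent $K''\supsetneq K$, hence (by (I3)) an independent $K+z$ with $z\not\subseteq K$. If we can choose such a $z$ inside $I+J$ we contradict the construction; if not, we use (nI3) on the pair $(K,K'')$ to get a hyperplane $X'\supseteq K$ with $K+w\in\tilde{\kI}$ for all $1$-spaces $w\not\subseteq X'$, and combine this with the earlier hyperplane controlling which $1$-spaces of $J$ fail to extend $K$ — the two hyperplanes cannot simultaneously swallow $J$ and leave room for $K''\not\subseteq X'$, forcing a contradiction. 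Making this two-hyperplane comparison precise, and checking that the hyperplane in (nI3) can be chosen to contain $K$ and the previously-used extension directions, is the delicate point; everything else is routine induction on dimension.
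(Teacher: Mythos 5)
Your overall strategy---reduce everything to proving (I4) and then augment $I$ inside $I+J$ using the exchange property---runs into a genuine gap exactly at the point you yourself flag as ``the delicate point,'' and that point is not a bookkeeping issue: it is the entire content of the theorem. The repeated augmentation step only uses the consequence (I3) of (nI3) (each application of (nI3) to the pair $(I,J)$ just hands you one $x\subseteq J$, $x\not\subseteq X$, with $I+x\in\tilde{\kI}$), and the paper's own motivating example $\kI=\{x,0\}$ in $\mathbb{F}_2^2$ shows that (I1)--(I3) do \emph{not} imply (I4). So after you have grown $I$ to some $K\subseteq I+J$ with $\dim K=\dim J$, the claim that $K$ is maximal in $\tilde{\kI}\cap\kL(A+B)$ is precisely where the extra strength of (nI3) must be spent, and your sketch of the ``two-hyperplane comparison'' does not produce a contradiction: if $K''\supsetneq$-dominates $K$, (nI3) applied to $(K,K'')$ gives a hyperplane $X'$ with $K+w\in\tilde{\kI}$ for all $w\not\subseteq X'$, but since $K$ is in general contained in neither $A$ nor $B$, no such $K+w$ violates the maximality of $I$ in $A$ or of $J$ in $B$, and nothing forces $J\subseteq X'$ either. (Also, your parenthetical claim that the hyperplane in (nI3) ``depends only on $I$ and $\dim J$'' is unjustified---the axiom only asserts existence per pair---though this is not where the argument breaks.)

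The paper avoids this difficulty by not attacking (I4) for arbitrary $A,B$. It first invokes the known reduction (Theorem 26 of \cite{bcj}) to the one-step version (I4''): for $I\in\max(A,\kI)$ and a single $1$-dimensional $x$, find a maximal member of $A+x$ inside $I+x$. After restricting to $A+x$ via Lemma \ref{restrict}, the space $A$ is itself a hyperplane of the ambient space, and the hyperplane $X$ produced by (nI3) can be shown to \emph{equal} $A$: otherwise some $y\subseteq A$, $y\not\subseteq X$ would give $I+y\in\tilde{\kI}$ inside $A$, contradicting the maximality of $I$ in $A$. Then $x\not\subseteq X=A$ forces $I+x\in\tilde{\kI}$, and its maximality in $A+x$ follows by intersecting any larger independent space with $A$. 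If you want to salvage your direct approach, you would essentially have to reprove that reduction from (I4'') to (I4) (which is the content of the cited theorem, done by induction on $\dim B$ one line at a time); as written, the proposal does not establish (I4).
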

\begin{proof}
We have to show that $\tilde{\kI}$ satisfies the axioms (I1), (I2), (I3) and (I4). The first two axioms are satisfied by definition and (nI3) implies (I3) as was noted in Remark \ref{vnI3impliesI3}, so it is left to prove (I4). By Theorem \ref{oldI4s} it is enough to prove the axiom (I4''):
\begin{itemize}
\item[(I4'')] Let $A \in \kL(E)$ and let $I \in \max(A,\I)$. 
Let $x \in \kL(E)$ be a $1$-dimensional space. Then there exists $J \in \max(x+A,\I)$ such that $J \subseteq x+I$.
\end{itemize}
Thanks to Lemma \ref{restrict} we can let $n=\dim(A+x)$ and restrict to $A+x$. (I4'') is direct if $x\subseteq A$ or if $I\in\max(\tilde{\kI},A+x)$, so suppose both are not the case. Then there is a $J\in\max(\tilde{\kI},A+x)$ with $\dim J>\dim I$. Moreover, $J\not\subseteq A$ because that would contradict the maximality of $I$. By (nI3), there is a codimension $1$ space $X$ in $A+x$ such that $I\subseteq X$, $J\not\subseteq X$ and for all $y\not\subseteq X$ we have $I+y\in\tilde{\kI}$. We now claim that $X=A$. If not, there would be a $y\not\subseteq X$, $y\subseteq A$ such that $I+y\in\tilde{\kI}$. Since $I+y\subseteq A$, this contradicts the maximality of $I$. So, $X=A$ and by (nI3) we have that $I+x\in\tilde{\kI}$. Moreover, $I+x\in\max(A+x,\tilde{\kI})$ because if there is a member of $\tilde{\kI}$ of bigger dimension in $A+x$, its intersection with $A$ would have dimension strictly bigger then $\dim I$, which contradicts, again, $I\in\max(A,\tilde{\kI})$. That proves (I4'') and shows that $(E,\tilde{\kI})$ is a $q$-matroid.
\end{proof}

\section{A new bases axiom}

As a consequence of our introduction of (nI3), we could define a new basis axiom (nB3), which again avoids the presence of the fourth axiom (B4).

\begin{Definition}
Let $E$ be a vector space and $\mathcal{B}$ a family of subspaces. We define the following property (axiom) of $\mathcal{B}$.
\begin{itemize}
\item[(nB3)] For all $B_1,B_2\in\kB$, and for each subspace $A$ that has codimension $1$ in $B_1$ \textcolor{red}{\sout{containing $B_1 \cap B_2$}} there exists $X\subseteq E$ of codimension $1$ in $E$ such that $X \supseteq A$, $X\not \supseteq B_2$ and $A+x \in \mathcal{B}$ for all $1$-dimensional $x\subseteq E$, $x\not\subseteq X$.
\end{itemize}
\end{Definition}

\begin{Remark}\label{basisclassical2}
\textcolor{red}{Note that contrarily to (B3), it is possible that $B_1=B_2$. Also, we drop the requirement that $B_1\cap B_2\subseteq A$. This requirement was needed in (B3) to make sure that $A+x$ was different from $B_1$. In (nB3), many bases are produced: some of them might be equal to $B_1$, but some are new. (Unless $B_1=E$, but then there is only one basis anyway.)}
Because $B_2\not\subseteq X$, there is an $x\subseteq B_2$ such that $x\not\subseteq X$ and thus $A+x\in\kB$. This shows that (nB3) implies (B3). Also, (nB3) becomes (B3) in the classical case, since there is only one element $x$ outside $X$ that, by construction, is in $B_2$.
\end{Remark}

To show that (nB3) holds, we use a similar approach to what was done for independent spaces. We prove that the axiom system (B1), (B2), (B4) is equivalent to the axiom system (B1), (B2), (nB3). Since we showed in Theorem \ref{B3isImplByB4} that (B4) implies (B3), we can freely use (B3) within the proofs for convenience. First we show that the axioms (B1), (B2) and (B4), together, imply the new axiom (nB3).

\begin{Theorem}\label{nB3holds}
Let $(E, \kB)$ be a $q$-matroid. Then, for the set $\kB$, the axiom (nB3) holds true.
\end{Theorem}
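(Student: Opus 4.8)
The plan is to obtain (nB3) as an immediate consequence of the fact, established above, that the new independence axiom (nI3) holds in every $q$-matroid, rather than redoing the argument from scratch with (B4). Write $\kI$ for the family of independent spaces of the $q$-matroid $(E,\kB)$, i.e.\ the subspaces contained in some member of $\kB$; then $\kB=\max(\kI)$ and $\kI$ satisfies (I1), (I2), (I4), hence also (nI3). The two standard facts about $q$-matroids I would invoke are: every member of $\kB$ has dimension $r(E)$, and conversely every independent space of dimension $r(E)$ is a basis (both follow from (I3) together with semimodularity of the rank).

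With these in hand the argument is short. Given $B_1,B_2\in\kB$ and a codimension $1$ subspace $A\subseteq B_1$ with $B_1\cap B_2\subseteq A$, first observe that $A\in\kI$ (as $A\subseteq B_1$) and $\dim A=r(E)-1<r(E)=\dim B_2$. Then apply (nI3) to the pair $I:=A$, $J:=B_2$: this produces a codimension $1$ subspace $X\subseteq E$ with $A\subseteq X$, $B_2\not\subseteq X$, and $A+z\in\kI$ for every $1$-dimensional $z\subseteq E$ with $z\not\subseteq X$. For any such $z$ one has $z\not\subseteq A$ because $A\subseteq X$, so $\dim(A+z)=\dim A+1=r(E)$, and therefore $A+z$ is a basis, i.e.\ $A+z\in\kB$. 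This $X$ is precisely what (nB3) demands, so we are done.

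I do not anticipate a real obstacle here; the only things requiring care are the two dimension facts quoted above. It is worth noting that the hypothesis $B_1\cap B_2\subseteq A$ is never used — it is kept only so that (nB3) parallels the shape of (B3). For completeness one can also give the more self-contained proof that mirrors the proof of ``(nI3) holds in a $q$-matroid'': let $\tilde A$ be the sum of $A$ with all $1$-spaces $y\not\subseteq A$ for which $A$ is a maximal intersection of a member of $\kB$ with $A+y$; use (B4) iteratively to conclude that $A$ is still a maximal such intersection inside $\tilde A$ (so $A+y\notin\kB$ for every $1$-space $y\subseteq\tilde A$ with $y\not\subseteq A$); note $\tilde A\neq E$ since (B3) yields some $x\subseteq B_2$ with $A+x\in\kB$; and pick $X$ of codimension $1$ with $\tilde A\subseteq X$, $B_2\not\subseteq X$. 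In that version the delicate step is the iteration of (B4) — verifying that a sum of two maximal intersections (with $A+y_1$ and with $A+y_2$) is again a maximal intersection with $A+y_1+y_2$ — which is exactly what the first route sidesteps.
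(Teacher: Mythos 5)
Your proof is correct, but it takes a genuinely different route from the paper. The paper proves (nB3) from scratch by mirroring its proof of (nI3): it forms the sum $C$ of all $A+y$ in which $A$ is a maximal intersection with a basis, iterates (B4) to show $A$ remains a maximal intersection inside $C$, uses (B3) to see $C\neq E$, and then chooses a hyperplane $X\supseteq C$. You instead reduce (nB3) to the already-established fact that (nI3) holds for the independent spaces $\kI$ of the $q$-matroid, using the cryptomorphism $\kB=\max(\kI)$ together with the two standard facts that all bases have dimension $r(E)$ and every independent space of dimension $r(E)$ is a basis; then $A\in\kI$ with $\dim A=r(E)-1<\dim B_2$, (nI3) hands you the hyperplane $X$, and each $A+z$ with $z\not\subseteq X$ is independent of dimension $r(E)$, hence a basis. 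Your argument is shorter, avoids repeating the delicate iteration-of-the-fourth-axiom step (which you correctly identify as the only fragile point of the direct proof), and has the side benefit of showing that the hypothesis $B_1\cap B_2\subseteq A$ is not actually needed for the conclusion. The trade-off is that you lean on the basis--independence cryptomorphism and the equidimensionality of bases, which the paper only imports by reference to the literature, whereas the paper's proof stays entirely within the axiom system (B1)--(B4) and is uniform in style with its treatment of (nI3). Both proofs are sound.
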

\begin{proof}
Let $B_1, B_2\in \kB$ and let $A\subseteq B_1$ of codimension $1$ \textcolor{red}{\sout{such that $B_1\cap B_2\subseteq A$}}. Consider all $1$-spaces $y$ not in $A$ such that $A \in \max(E,(A+y)\cap \kB)$. Let $C$ be the sum of \textcolor{red}{\sout{all such $A+y$}} \textcolor{red}{$A$ and all such $y$}. We claim that $A \in \max(E,C\cap \kB)$.
This can be seen by applying (B4) multiple times. Since $A\in\max(E,(A+y_1)\cap\kB)$ and $A\in\max(E,(A+y_2)\cap\kB)$, by (B4) $A\in\max(E,((A+y_1)+(A+y_2))\cap\kB)$. Iterating this argument shows that $A\in\max(E,C\cap\kB)$ and moreover, all $A+y$ with $y\subseteq C$, $y\not\subseteq A$ are not in $\kB$. \\
On the other hand, for all $1$-spaces $z\subseteq E$, $z\not\subseteq C$ we have that $A+z\in\kB$. \textcolor{red}{\sout{We know from (B3) that there is at least one such $z$, namely the $x\in B_2$, $x\not\subseteq B_1$ such that $A+x\in\kB$.}} \textcolor{red}{We know such $z$ exists, for example any $z$ with $z\subseteq B_1$, $z\not\subseteq A$.} This means that $\dim C\leq\dim E-1=n-1$. Now take a space $X$ of codimension $1$ in $E$ with $C\subseteq X$ and $x\not\subseteq X$. Then for all $z\subseteq E$, $z\not\subseteq X$ we have that $A+z\in\kB$ and this proves (nB3).
\end{proof}

Now we work towards the converse of this theorem. For this we will use the following two variations of the axiom (B4).

\begin{itemize}
    \item[(B4')] Let $A,B\subseteq E$ and $I\in\max(E,A\cap\kB)$. Then there exists $J\in\max(E,(A+B)\cap\kB)$ such that $J\subseteq I+B$. 
    \item[(B4'')]  Let $A\subseteq E$ and $I\in\max(E,A\cap\kB)$. Let $x \subseteq E$ be a one-dimensional space. Then, there exists $J\in\max(E,(A+x)\cap\kB)$ such that $J\subseteq x+I$.
\end{itemize}

The next result shows that we can in fact take these axioms to define a $q$-matroid. It is the statement of Proposition \ref{newI4s} but for bases instead of independent spaces, and the proof is similar to to proof for independence axioms in \cite[Theorem 26]{bcj}.

\begin{Theorem}\label{B4Variations}
Let $\kB$ be a collection of subspaces satisfying (B1) and (B2).
Then the axiom systems (B1), (B2), (B4); (B1), (B2), (B4') and (B1), (B2), (B4'') are pairwise equivalent.
\end{Theorem}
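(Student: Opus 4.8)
The plan is to establish the four implications
(B4) $\Rightarrow$ (B4'), (B4') $\Rightarrow$ (B4''), (B4'') $\Rightarrow$ (B4') and (B4') $\Rightarrow$ (B4), all under the standing hypotheses (B1)--(B3); combined, the first and last give (B4) $\Leftrightarrow$ (B4'), the middle two give (B4') $\Leftrightarrow$ (B4''), and hence all three systems are pairwise equivalent. Throughout I would work with the auxiliary quantity $\rho(X):=\max\{\dim(B'\cap X):B'\in\kB\}$ for $X\in\kL(E)$, which is well defined by (B1), is monotone in $X$, and for which the phrase ``$I$ is a maximal intersection of a basis with $X$'' is synonymous with ``$I=B'\cap X$ for some $B'\in\kB$ with $\dim I=\rho(X)$''. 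Since (B1)--(B3) are assumed, they may all be used freely.

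Three of the implications are routine. For (B4) $\Rightarrow$ (B4'): given $A,B$ and a maximal intersection $I$ of a basis with $A$, choose $B'\in\kB$ with $\dim(B'\cap B)=\rho(B)$ so that $J_0:=B'\cap B$ is a maximal intersection of a basis with $B$, and apply (B4) to $I$ and $J_0$ to obtain the required $K\subseteq I+J_0\subseteq I+B$. The implication (B4') $\Rightarrow$ (B4'') is just the case $B=x$. For (B4'') $\Rightarrow$ (B4'): write $B=x_1\oplus\cdots\oplus x_m$ with each $x_i$ one-dimensional, set $A_j:=A+x_1+\cdots+x_j$, and, starting from $I_0:=I$, apply (B4'') $m$ times to produce maximal intersections $I_j$ of bases with $A_j$ satisfying $I_j\subseteq I_{j-1}+x_j$; an immediate induction gives $I_j\subseteq I+x_1+\cdots+x_j$, so $I_m\subseteq I+B$ is a maximal intersection of a basis with $A_m=A+B$.

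The substance is the implication (B4') $\Rightarrow$ (B4). Given $A,B$ and maximal intersections $I,J$ of bases with $A$ and $B$ respectively, I would first prove the chain $\rho(I+J)=\rho(A+J)=\rho(A+B)$. The inequalities $\rho(I+J)\le\rho(A+J)\le\rho(A+B)$ are just monotonicity. For the reverse direction, apply (B4') to the pair $(A,J)$ with the maximal intersection $I$: this returns a maximal intersection of a basis with $A+J$ lying inside $I+J$, whence $\rho(A+J)\le\rho(I+J)$; and apply (B4') to the pair $(B,A)$ with the maximal intersection $J$: this returns a maximal intersection of a basis with $A+B$ lying inside $A+J$, whence $\rho(A+B)\le\rho(A+J)$. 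Once $\rho(I+J)=\rho(A+B)$ is known, choose $B_0\in\kB$ with $\dim(B_0\cap(I+J))=\rho(I+J)$ and put $K:=B_0\cap(I+J)$; then $K\subseteq B_0\cap(A+B)$ while $\dim(B_0\cap(A+B))\le\rho(A+B)=\dim K$, so $K=B_0\cap(A+B)$, and $K$ is a maximal intersection of a basis with $A+B$ contained in $I+J$. That is (B4).

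The one genuinely non-formal point is the middle step of that chain: recognising that applying (B4') in the two ``directions'' $(A,J)$ and $(B,A)$ is exactly what is needed to pin $\rho$ down on $A+J$ and on $A+B$, after which (B4) drops out by a dimension squeeze with no further exchange argument. This is the bases analogue of the treatment of the independence axioms in \cite[Theorem 26]{bcj}; alternatively one can transport the problem to the down-closure $\I:=\{I:I\subseteq B\text{ for some }B\in\kB\}$, observe that the maximal intersections of bases with $A$ are precisely the members of $\max(A,\I)$, note that (B4), (B4') and (B4'') then become the bases versions of (I4), (I4') and (I4''), and invoke that theorem.
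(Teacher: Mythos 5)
Your proof is correct and follows essentially the same route as the paper: the easy implications (B4)$\Rightarrow$(B4')$\Rightarrow$(B4'') and the induction/iteration for (B4'')$\Rightarrow$(B4') are identical, and your (B4')$\Rightarrow$(B4) argument is the paper's two-application-plus-dimension-squeeze argument, merely routed through the intermediate space $A+J$ instead of $I+B$ and bookkept with the function $\rho$. No gaps.
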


\begin{proof}
First, we assume that (B4) holds for the collection $\kB$ and we show that this implies that (B4') and (B4'') also hold.
Let $A,B\subseteq E$ and let $I\in\max(E,A\cap\kB)$ and $J\in\max(E,B\cap\kB)$. By (B4) there is a $K\in\max(E,(A+B)\cap\kB)$ with $K \subseteq I+J$. Since $J\subseteq B$, $K$ is also contained in $I+B$. This shows (B4'). We get (B4'') by taking $B=x$.

Suppose that (B4'') holds. We will show that (B4') holds. Let $A ,B\subseteq E$ and let $I\in\max(E,A\cap\kB)$.
Suppose that (B4') holds for all subspaces of dimension less than $\dim(B)$. Let $C$ be a subspace of $B$ of codimension 1 in $B$ and write $B=x+C$. By hypothesis, there exists $J\in\max(E,(A+C)\cap\kB)$ such that $J \subseteq I+C$. By (B4'') there exists 
$J'\in\max(E,(A+C+x)\cap\kB) = \max(E,(A+B)\cap\kB)$  such that $J' \subseteq J + x \subseteq I+C+x=I+B$. This proves (B4').
    
Now suppose that (B4') holds. Let $A,B\subseteq E$ and let 
$I\in\max(E,A\cap\kB)$ and $J\in\max(E,B\cap\kB)$. We claim there is a $K\in\max(E,(A+B)\cap\kB)$ with $K \subseteq I+J$.
Since  $J\in\max(E,B\cap\kB)$, applying (B4') to $B$ and $I$ gives that these exists $N\in\max(E,(I+B)\cap\kB)$ such that $N \subseteq I+J$.
  
Again by (B4'), there exists 
$M\in\max(E,(A+B)\cap\kB)$ such that $M \subseteq I+B$. But then $M\in\max(E,(I+B)$ and hence $M$ and $N$ have the same dimension.
It follows that  $N\in\max(E,(A+B)\cap\kB)$ and $N \subseteq I+J$ and so (B4') implies (B4). The result follows.
\end{proof} 

We now prove the converse of Theorem \ref{nB3holds}.

\begin{Theorem}\label{verynewB3thm}
Let $E$ be a vector space and let $\tilde{\kB}$ be a collection of subspaces satisfying the axioms (B1), (B2) and (nB3). Then $(E,\tilde{\kB})$ is a $q$-matroid.
\end{Theorem}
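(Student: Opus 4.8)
The strategy is to mirror the proof of Theorem~\ref{verynewI3thm}. By Remark~\ref{basisclassical2}, the axiom (nB3) implies (B3), so $\tilde{\kB}$ satisfies (B1), (B2), (B3); as in the classical case these three axioms already force all members of $\tilde{\kB}$ to have a common dimension (the $q$-analogue of the usual cardinality argument, choosing $B_1,B_2\in\tilde{\kB}$ of different dimensions minimising $\dim(B_1+B_2)-\dim(B_1\cap B_2)$ and applying (B3), goes through). By Theorem~\ref{B4Variations} it then suffices to prove that $\tilde{\kB}$ satisfies (B4''); once this is done $\tilde{\kB}$ satisfies (B1)--(B4) and $(E,\tilde{\kB})$ is a $q$-matroid.

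To prove (B4'') I would first establish a restriction lemma for bases, in the spirit of Lemma~\ref{restrict}: for every $F\subseteq E$, the family $\tilde{\kB}|_F:=\max\{B\cap F : B\in\tilde{\kB}\}$ satisfies (B1), (B2), (nB3) with ground space $F$, and for every $A\subseteq F$ the maximal intersections of members of $\tilde{\kB}$ with $A$ are exactly the maximal intersections of members of $\tilde{\kB}|_F$ with $A$; in particular (by the previous paragraph applied to $\tilde{\kB}|_F$) all members of $\tilde{\kB}|_F$ have the same dimension. Granting this, fix $A\subseteq E$, a $1$-dimensional $x$, and a maximal intersection $I$ of a member of $\tilde{\kB}$ with $A$; put $F=A+x$ and work inside $\tilde{\kB}|_F$. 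If $x\subseteq A$, or if $I$ is already a maximal intersection with $A+x$, then $J=I$ works; this covers the base case $\dim A=\dim I$ and uses nothing beyond the definitions. Otherwise $A$ is a codimension $1$ subspace of $F$ and the maximal-intersection dimension with $F$ equals $\dim I+1$. A short argument (grow $B_1\cap F$ upward to a member of $\tilde{\kB}|_F$, using that $(B\cap F)\cap A$ can never exceed $I$) then produces $B_1'\in\tilde{\kB}|_F$ with $I$ of codimension $1$ in $B_1'$, and equidimensionality of $\tilde{\kB}|_F$ together with the non-triviality of the case yields a $B_2'\in\tilde{\kB}|_F$ with $B_1'\cap B_2'\subseteq I$ (so $B_2'\not\subseteq A$). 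Apply (nB3) in $F$ to $B_1'$, $B_2'$ and $A':=I$: there is a codimension $1$ subspace $X$ of $F$ with $I\subseteq X$, $B_2'\not\subseteq X$, and $I+z\in\tilde{\kB}|_F$ for every $1$-dimensional $z\subseteq F$ with $z\not\subseteq X$. If $A\not\subseteq X$ there is a $1$-dimensional $z\subseteq A$, $z\not\subseteq X$, whence $I+z\in\tilde{\kB}|_F$ with $I+z\subseteq A$ and $\dim(I+z)>\dim I$, contradicting the maximality of $I$ with $A$; so $A\subseteq X$ and, both being codimension $1$ in $F$, $X=A$. Since $x\not\subseteq A=X$ we get $I+x\in\tilde{\kB}|_F$, hence by the restriction lemma $I+x$ is a maximal intersection of a member of $\tilde{\kB}$ with $A+x$; as $I+x\subseteq x+I$, taking $J=I+x$ proves (B4'').

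The main obstacle is the restriction lemma, and within it the fact that (nB3) descends to $\tilde{\kB}|_F$. The natural approach is: given $B_1\cap F,\,B_2\cap F\in\tilde{\kB}|_F$ and a codimension $1$ subspace $A'$ of $B_1\cap F$ containing $B_1\cap B_2\cap F$, enlarge $A'$ to a codimension $1$ subspace $\widehat A$ of $B_1$ containing $B_1\cap B_2$, apply (nB3) in $E$ to $B_1$, $B_2$, $\widehat A$, and intersect the hyperplane of $E$ it produces with $F$. This forces two things to be checked by a dimension count: that $A'+(B_1\cap B_2)$ has codimension at most $1$ in $B_1$ (so that $\widehat A$ exists), and that after intersecting with $F$ the clause ``$\widehat A+z\in\tilde{\kB}$ for all $1$-dimensional $z\not\subseteq X$'' really does give ``$A'+z\in\tilde{\kB}|_F$ for all $1$-dimensional $z\subseteq F$ outside $X\cap F$'' --- equivalently, it rules out the a priori possible pathology of a maximal-by-inclusion intersection $B\cap F$ being strictly smaller in dimension than another member of $\tilde{\kB}|_F$. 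I expect this bookkeeping, possibly reorganised as an induction on $\dim A-\dim I$ (so that (nB3) is only ever invoked in the situation where a maximal intersection must be pushed past exactly one extra dimension, where it applies directly), to be where essentially all the work lies; the remainder is a faithful transcription of the proof of Theorem~\ref{verynewI3thm} with ``independent space'' replaced throughout by ``maximal intersection of a basis''.
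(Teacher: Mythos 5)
Your overall mechanism is the right one and matches the paper's: reduce to (B4$''$) via Theorem~\ref{B4Variations}, handle the trivial cases $x\subseteq A$ and $I$ already maximal in $A+x$, apply (nB3) to a suitable pair of bases and a codimension-$1$ subspace sitting over $I$, and show by the maximality of $I$ in $A$ that the resulting hyperplane meets $A+x$ exactly in $A$, so that $x$ lies outside it and $I+x$ is the required maximal intersection. However, your proof routes everything through a restriction lemma for the family $\tilde{\kB}|_F=\max\{B\cap F : B\in\tilde{\kB}\}$, and you yourself identify this lemma as ``where essentially all the work lies'' without proving it. That is a genuine gap, and not a small one: verifying (nB3) for $\tilde{\kB}|_F$ requires (i) knowing that all maximal-by-inclusion intersections $B\cap F$ have the same dimension, and (ii) converting the hyperplane $X\subseteq E$ produced by (nB3) for $B_1,B_2,\widehat A$ into a hyperplane of $F$ such that $A'+z$ is not merely contained in some $\widehat A+z\in\tilde{\kB}$ but is itself a member of $\tilde{\kB}|_F$, i.e.\ a \emph{maximal} intersection with $F$. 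Both points are essentially (B4)-type exchange statements for intersections with $F$, so the lemma is close in difficulty to the theorem you are trying to prove; as stated, your argument risks circularity.

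The paper avoids this entirely by never restricting the basis family. It takes $B_1,B_2\in\tilde{\kB}$ with $I=B_1\cap A$ and $J=B_2\cap(A+x)$, chooses a codimension-$1$ subspace $C\subseteq B_1$ with $C\cap(A+x)=I$ and $B_1\cap B_2\subseteq C$, and applies (nB3) in $E$ to $B_1,B_2,C$. The hyperplane $X$ so obtained satisfies $X\cap(A+x)=A$ by exactly the maximality argument you use (a $z\subseteq A$, $z\not\subseteq X$ would give $C+z\in\tilde{\kB}$ and $(C+z)\cap(A+x)=I+z$, contradicting maximality of $I$), whence $C+x\in\tilde{\kB}$ and $I+x=(C+x)\cap(A+x)$ is the desired maximal intersection. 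If you want to salvage your version, you should either prove the restriction lemma in full (including equidimensionality of $\tilde{\kB}|_F$ from (B1), (B2), (nB3) alone) or, more simply, replace the passage to $\tilde{\kB}|_F$ by the direct choice of $B_1,B_2,C$ in $E$ as above.
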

\begin{proof}
We have to show that $\tilde{\kB}$ satisfies the axioms (B1), (B2), and (B4), since then (B3) is implied by Theorem \ref{B3isImplByB4}. The first two axioms are satisfied by definition, so it is left to prove (B4). By Theorem \ref{B4Variations}, it is enough to prove the axiom (B4'').  

Let $A\subseteq E$ and 
$I\in\max(E,A\cap \tilde{\kB})$. Let $x\subseteq E$ be a $1$-dimensional space. (B4'') is direct if $x\subseteq A$ or if
$I\in\max(E,(A+x)\cap \tilde{\kB})$, so suppose both are not the case. Then there is a $J\in\max(E,(A+x)\cap \tilde{\kB})$ with $\dim J>\dim I$. Moreover, $J\not\subseteq A$ because that would contradict the maximality of $I$. 

Since $I$ and $J$ are intersections of members of $\tilde{\kB}$ with $A$ and $A+x$, respectively, we can find $B_1,B_2\in\tilde{\kB}$ such that $I=B_1\cap A$ and $J=B_2\cap(A+x)$. Moreover, there is a codimension $1$ subspace $C\subseteq B_1$ such that $C\cap(A+x)=I$ \textcolor{red}{\sout{and $B_1\cap B_2\subseteq C$}}. Now we apply (nB3) to $B_1,B_2$ and $C$. This gives a codimension $1$ space $X\subseteq E$ such that $C\subseteq X$, $B_2\not\subseteq X$ and $C+y\in \tilde{\kB}$ for all $1$-dimensional $y\subseteq E$, $y\not\subseteq X$.

We now claim that $X\cap(A+x)=A$. If not, there would be a $z\not\subseteq X$, $z\subseteq A$ such that $C+z\in\tilde{\kB}$. Then $(C+z)\cap(A+x)=I+z$ and this contradicts the maximality of $I$ in $A$. So, $X\cap(A+x)=A$ and in particular $x\not\subseteq X$, so by (nB3) we have that $C+x\in\tilde{\kB}$. Moreover, $I+x\in\max(E,(A+x)\cap \tilde{\kB})$ because if there is a bigger intersection with a member of $\tilde{\kB}$ in $A+x$, its intersection with $A$ would have dimension strictly bigger then $\dim I$, which contradicts, again, that $I\in\max(E,A\cap \tilde{\kB})$. That proves (B4'') and shows that $(E,\tilde{\kB})$ is a $q$-matroid.
\end{proof}

\section{A new spanning spaces axiom}

In this section we state and prove that it is possible to define spanning spaces with three axioms, this being an easy consequence of what we did for independent spaces.

\begin{Definition}
Let $E$ be a vector space and $\mathcal{S}$ a family of subspaces. We define the following property (axiom) of $\mathcal{S}$.
\begin{itemize}
\item[(nS3)] For all $S_1,S_2\in\mathcal{S}$ satisfying $\dim S_2<\dim S_1$, there exists a $1$-dimension subspace $x\subseteq S_1$, $x\not\subseteq S_2$ such that for all codimension-one $X\subseteq E$ with $X\not\supseteq x$ we have $X\cap S_1\in\kS$.
\end{itemize}
\end{Definition}

\begin{Theorem}
Let $E$ be a vector space and let $\kS$ be a family of subspaces satisfying (S1), (S2) and (nS3). Then $\kS$ is the family of spanning spaces of a $q$-matroid.
\end{Theorem}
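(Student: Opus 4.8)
The plan is to reduce this to the independence statement of Theorem~\ref{verynewI3thm} by dualizing, in the same spirit in which Theorem~\ref{S3isImplByS4} follows from Theorem~\ref{I3isImplByI4}. Fix the involutive anti-isomorphism $\perp\colon\kL(E)\to\kL(E)$ and, for a family $\kS\subseteq\kL(E)$, put $\kS^\perp:=\{\,S^\perp:S\in\kS\,\}$. Since $\perp$ is a bijection that reverses inclusion, sends $E$ to $\{0\}$ (and back), and sends $1$-dimensional spaces to codimension-$1$ spaces (and back), the first thing I would do is translate the hypotheses: I claim $\kS$ satisfies (S1), (S2), (nS3) if and only if $\tilde\kI:=\kS^\perp$ satisfies (I1), (I2), (nI3). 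Here (S1) $E\in\kS$ says $\{0\}=E^\perp\in\tilde\kI$, giving (I1), and the upward closure of $\kS$ demanded by (S2) is exactly the downward closure of $\tilde\kI$ demanded by (I2).

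The only translation requiring an actual computation is (nS3)$\leftrightarrow$(nI3), and this is the step I expect to need the most care. Write $I=S_1^\perp$, $J=S_2^\perp\in\tilde\kI$; then $\dim I<\dim J$ iff $\dim S_2<\dim S_1$. A codimension-$1$ space $X\subseteq E$ with $I\subseteq X$ and $J\not\subseteq X$ corresponds, via $x:=X^\perp$, to a $1$-dimensional $x\subseteq S_1$ with $x\not\subseteq S_2$. For a $1$-dimensional $y\subseteq E$ one has $y\not\subseteq X$ iff $x=X^\perp\not\subseteq y^\perp$, and $(I+y)^\perp=I^\perp\cap y^\perp=S_1\cap y^\perp$; as $y$ ranges over all $1$-dimensional spaces outside $X$, the space $y^\perp$ ranges over all codimension-$1$ spaces not containing $x$. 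Hence ``$I+y\in\tilde\kI$ for every $1$-dimensional $y\not\subseteq X$'' becomes exactly ``$S_1\cap Y\in\kS$ for every codimension-$1$ $Y$ with $Y\not\supseteq x$'', so (nI3) for $\tilde\kI$ is equivalent to (nS3) for $\kS$. Matching these quantifiers correctly is the main (minor) obstacle.

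With the dictionary in place, Theorem~\ref{verynewI3thm} applies to $\tilde\kI$ and produces a $q$-matroid $M'=(E,r')$ whose independent spaces are precisely $\tilde\kI$. I would then set $M:=(M')^*$, a $q$-matroid by Definition~\ref{defdual}, with rank function $r(A)=\dim A-r'(E)+r'(A^\perp)$. Using $r'(\{0\})=0$, $E^\perp=\{0\}$, and $\dim A^\perp=n-\dim A$ (an involutive anti-automorphism of the graded lattice $\kL(E)$ reverses the grading), one checks that a subspace $S$ satisfies $r(S)=r(E)$ iff $r'(S^\perp)=\dim S^\perp$, i.e.\ iff $S^\perp$ is independent in $M'$, i.e.\ iff $S^\perp\in\tilde\kI$, i.e.\ iff $S\in\kS$ (since $\perp$ is an involution). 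Thus $\kS$ is exactly the family of spanning spaces of $M$, which is the assertion; in particular $\kS$ then automatically satisfies (S1)--(S4).

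Alternatively, the last paragraph can be packaged by invoking directly the cryptomorphic fact from \cite{bcj} that the spanning spaces of a $q$-matroid are the $\perp$-duals of the independent spaces of its dual; in either formulation, everything after the axiom translation of the second paragraph is routine.
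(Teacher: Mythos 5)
Your proposal is correct and follows essentially the same route as the paper, which simply appeals to duality between spanning spaces and the independent spaces of the dual $q$-matroid (the paper's proof is a one-sentence remark, with typos referring to ``(nS4)'' and ``(nI4)''). Your version actually carries out the translation of (S1), (S2), (nS3) under $\perp$ into (I1), (I2), (nI3) and the rank computation for the dual, so it is a fleshed-out form of the paper's argument rather than a different one.
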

\begin{proof}
This follows directly from the fact that spanning spaces are the dual spaces of the independent spaces of the dual $q$-matroid, and the axiom (nS3) is the dual statement of (nI3).
\end{proof}

\section{Two new \emph{q}-cryptomorphisms}

In this section we apply our results on new axioms for independent spaces and bases to derive two new cryptomorphisms: between circuits and independent spaces, and between independent spaces and bases. The latter was already done in \cite{JP18} but as we will discuss, we believe there is a gap in that proof.

We will see that while (nI3) might feel like a natural $q$-analogue of the third axiom for matroids, it turns out that (oI4) or (I4'') is much more practical in proofs.

\subsection{Circuits and independent spaces}
Here we prove that the axioms (C1), (C2) and (C3) are equivalent to the axioms (I1), (I2) and (I4''). We follow Lemma 1.1.3 and Theorem 1.1.4 of \cite{oxley}.

\begin{Theorem}\label{FromItoC}
Let $(E,\mathcal{I})$ be a $q$-matroid. Define
\[ \mathcal{C}_\mathcal{I}=\{C\subseteq E:C\notin\mathcal{I},I\in\mathcal{I}\text{ for all } I\subsetneq C\}. \] 
Then $\mathcal{C}$ is a family of circuits, that is, it satisfies the axioms (C1), (C2) and (C3).
\end{Theorem}
\begin{proof}
The axioms (C1) and (C2) follow directly from the definition of $\mathcal{C}$. We will prove (C3) by making use of the independence axiom (oI4).

Let $C_1,C_2 \in \kC$, $C_1\neq C_2$ and $X \subseteq E$ a codimension one space.
Suppose towards a contradiction $(C_1+C_2)\cap X $ does not contain any circuit, this making it an independent space.
Let $I_1 \subseteq C_1$ and $I_2 \subseteq C_2$ be of codimension $1$ such that $C_1\cap C_2=I_1\cap I_2$. Note that $I_1$ and $I_2$ are independent and since they have codimension $1$ they are maximal with respect to both dimension and inclusion.
By (oI4) there is an $I\in\max\{C_1+C_2,\kI\}$ such that $I\subseteq I_1+I_2$.
Let $F\subseteq E$ a codimension $1$ space containing $C_1+I_2$ but not containing $C_2$ and $G\subseteq E$ a codimension $1$ space containing $C_2+I_1$ but not containing $C_1$. Clearly $F \neq G$, so $\dim((C_1+C_2)\cap F \cap G)= \dim(C_1+C_2)-2$.
Now, $I \subseteq ((C_1+C_2)\cap F \cap G)$, so 
\[\dim(I) \leq \dim(C_1+C_2)-2 < \dim((C_1+C_2)\cap X). \]
However, by assumption $(C_1+C_2)\cap X$ is an independent space, this giving a contradiction with the maximality of $I$.
We conclude that (C3) needs to hold.
\end{proof}

\begin{Theorem}\label{FromCtoI}
Let $(E,\mathcal{C})$ be a $q$-matroid. Define
\[ \mathcal{I}_\mathcal{C}=\{I\subseteq E:C\not\subseteq I\text{ for all }C\in\mathcal{C}\}. \]
Then $\mathcal{I}$ is a family of independent spaces, that is, it satisfies the axioms (I1), (I2) and (I4'').
\end{Theorem}
\begin{proof}
The axioms (I1) and (I2) follow directly from the definition of $\mathcal{I}$. For (I4''), let $A\subseteq E$ and let $I\subseteq A$ be a maximal independent subspace. (Throughout this proof, maximality is always taken with respect to dimension.) Let $x\subseteq E $ be a $1$-dimensional space. If $x\subseteq A$, (I4'') clearly holds. If $I=A$, we also get that (I4'') holds: either $I$ is a maximal independent space in $A+x$, or $I+x=A+x$ is independent itself. So assume $x\not\subseteq A$ and $I\neq A$.

Towards a contradiction, suppose (I4'') does not hold for $A$ and $x$. Let $J$ be a maximal independent subspace in $A+x$. If $\dim J=\dim I$, we have that $I$ is also a maximal independent space in $A+x$, contradicting that there are no maximal independent spaces in $I+x$. So we have that $\dim J>\dim I$. In fact, $\dim J=\dim I+1$, because otherwise $J\cap A$, that is independent and has dimension at least $\dim J-1$, contradicts the maximality of $I$ in $A$.

There might be several choices for $J$: pick one such that $I\cap J$ is maximal. We claim that $I$ cannot be contained in $J$. If this was the case, we can write $J=I+y\in\mathcal{I}$ but $I+x\notin\mathcal{I}$ by construction. This implies $x\not\subseteq J$ hence $J+x\notin \mathcal{I}$. Thus $I+x$ and $J+x$ both contain a circuit. Apply (C3) to these circuits with a codimension $1$ space $Y$ such that $A\subseteq Y$, $A+y\not\subseteq Y$. This yields a circuits inside $J\cap A$, which is a contradiction because $J\cap A$ is independent by (I2). So, $I\not\subseteq J$.

We pick a codimension $1$ space $X\subseteq E$ and a $1$-dimensional space $e$ such that $J\subseteq X$, $I\not\subseteq X$, $e\subseteq I$ and $e\not\subseteq X$. This implies that $(J+e)\cap X=J$. For any codimension $1$ space $F\subseteq E$ with $I\subseteq F$, $J\not\subseteq F$ we can now construct the following. Define $T_F=(J+e)\cap F$. Since $e\not\subseteq J$ and $J\not\subseteq F$ we have that $\dim T_F=\dim J$. However, $\dim T_F\cap I>\dim J\cap I$, so by assumption on our choice of $J$ we have that $T_F$ is not independent, hence contains a circuit $C_F$. We cannot have that $C_F\subseteq X$, because that would imply $C_F\subseteq (J+e)\cap F\cap X = J\cap F\subseteq J$ and the latter is independent.

Let $G,H\subseteq E$ be two distinct codimension $1$ spaces with $I\subseteq G,H$ and $J\not\subseteq G,H$. These exist, because $\dim J-\dim(J\cap I) \geq \dim J-(\dim I-1)=2$. We can also assume that $J-G\neq J-H$. Now we apply the construction as above to obtain $C_G$ and $C_H$. From our last assumption it follows that $C_G\neq C_H$. Now apply (C3) to $C_G, C_H$ and $X$. This gives a circuit $C\subseteq (C_G+C_H)\cap X$. Note that since $C_G,C_H\not \subseteq X$, $C\neq C_G,C_H$. Now $C_G+C_H\subseteq (J+e)\cap G + (J+e)\cap H\subseteq J+e$ so $C \subseteq (C_G+C_H)\cap X\subseteq (J+e)\cap X=J$. This is a contradiction because $J$ is independent. We conclude that (I4'') needs to hold.
\end{proof}

\begin{Corollary}\label{WeHaveCrypt}
Let $(E,\kI)$ be a collection of independent spaces and let $(E,\kC)$ be a collection of circuits. 
\begin{enumerate}
\item $(E,\kI)$ determines a $q$-matroid with collection of independent spaces $\kI$ and collection of circuits $\kC_\kI$.
\item $(E,\kC)$ determines a $q$-matroid with collection of circuits $\kC$ and collection of independent spaces  $\kI_\kC$.
\end{enumerate}
\end{Corollary}
\begin{proof}
It was shown in \cite{bcj} that $(E,\mathcal{I})$ determines a $q$-matroid with collection of independent space $\mathcal{I}$ and that $(E,\mathcal{C})$ determines a $q$-matroid with collection of circuits $\mathcal{C}$.
The statements now follow from Theorems \ref{FromItoC} and \ref{FromCtoI} and the straightforward result that $\mathcal{I}_{\mathcal{C}_\mathcal{I}}=\mathcal{I}$ and $\mathcal{C}_{\mathcal{I}_\mathcal{C}}=\mathcal{C}$.
\end{proof}

\subsection{Bases and independent spaces}

A cryptomorphism between independent spaces and bases was proven in \cite[Theorem 37]{JP18}. However, we believe that there is a gap in that proof. In \cite[Theorem 37]{JP18}, one of the steps is assuming a collection of bases, defining $\kI_\kB=\{I\subseteq B:B\in\kB\}$, and proving $\kI$ satisfies the axioms (I1)-(I4). In the proof of (I3), truncation is used. It was proven earlier in \cite{JP18} that the truncation of a $q$-matroid, defined by its rank function, is again a $q$-matroid. However, when assuming a collection of bases, this result is not valid: it only becomes valid once a cryptomorphism between bases and the rank function is established. This is not yet the case -- in fact, a cryptomorphism between bases and the rank function would be a corollary of the cryptomorphism between bases and independent spaces, which is the goal of \cite[Theorem 37]{JP18}.

In order to fix this issue, we can use our results from Section \ref{redundancy} that show the redundancy of the axioms (I3) and (B3). As was also mention in \cite{JP18}, the axioms (I4) and (B4) are easily related to each other. The next lemma makes this precise.

\begin{Lemma}\label{I4B4TheSame}
\begin{enumerate}
\item Let $\kB$ be a collection of 
subspaces of $E$ satisfying (B1) and (B2). Define $\kI_\kB=\{I\subseteq B:B\in\kB\}$. Then for all $A\subseteq E$, $\max(E,A \cap \kB)= \max(A,\kI_\kB)$.
\item Let $\kI$ be a collection of subspaces of $E$ satisfying (I1) and (I2). Define $\kB_\kI=\max(\kI)$. Then for all $A\subseteq E$, $\max(A,\kI)=\max(E,A\cap\kB_\kI)$.
\end{enumerate}
\end{Lemma}
\begin{proof}
\begin{enumerate}
\item Let $I\in\max(E,A \cap \kB)$. Then by definition, $I\in\kI_\kB$. Suppose there is an $I'\in\max(A,\kI_\kB)$ with $\dim(I)< \dim(I')$. Then there would be a $B\in\kB$ such that $I'\subseteq B$, hence $I'\in\max(E, A\cap \kB)$, contradicting the maximality of $I$.
For the reverse inclusion, let $I\in\max(A,\kI_\kB)$. Then there is a $B\in\kB$ such that $I=B\cap A$. Suppose there is a $B'\in\kB$ such that $\dim(B' \cap A) > \dim(I)$. Then $B' \cap A\in \kI_\kB$, contradicting the maximality of $I$. This proves that $\max(E,A \cap \kB)= \max(A,\kI_\kB)$.
\item It was proven in \cite[Theorem 37]{JP18} that $\kB_\kI$ satisfies the axioms (B1) and (B2). Also, it was shown that $\kB_{\kI_\kB}=\kB$ and that $\kI_{\kB_\kI}=\kI$. Applying the first part of this lemma to $\kB_\kI$ gives that $\max(E,A \cap \kB_\kI)= \max(A,\kI_{\kB_\kI})=\max(A,\kI)$. \qedhere
\end{enumerate}
\end{proof}

\begin{Corollary}\label{WeHaveCrypt2}
Let $(E,\kI)$ be a collection of independent spaces and let $(E,\kB)$ be a collection of bases. 
\begin{enumerate}
\item $(E,\kI)$ determines a $q$-matroid with collection of independent spaces $\kI$ and collection of bases $\kB_\kI$.
\item $(E,\kB)$ determines a $q$-matroid with collection of bases $\kB$ and collection of independent spaces $\kI_\kB$.
\end{enumerate}
\end{Corollary}
\begin{proof}
By Theorem \ref{I3isImplByI4}, a collection of independent spaces is completely determined by the axioms (I1), (I2) and (oI4) and moreover, $(E,\kI)$ defines a $q$-matroid. By Theorems \ref{B3isImplByB4} and \ref{B4Variations}, a collection of bases is completely determined by the axioms (B1), (B2) and (B4).

Assume $\kI$ satisfies (I1), (I2), (I4). By \cite[Theorem 37]{JP18}, $\kB_\kI$ satisfies (B1) and (B2) and by Lemma \ref{I4B4TheSame} it satisfies (B4). For the converse, assume $\kB$ satisfies (B1), (B2) and (B4). Again by \cite{JP18}, $\kI_\kB$ satisfies (I1) and (I2) and by Lemma \ref{I4B4TheSame} it satisfies (oI4). Finally, $\kB_{\kI_\kB}=\kB$  and $\kI_{\kB_\kI}=\kI$ is also proven in \cite[Theorem 37]{JP18}.
\end{proof}

\subsection*{Acknowledgement}
The authors wish to thank Heide Gluesing-Luerssen for her comments leading to the discussion in Section \ref{discussionI4}, and also the attendants of the special session ``$q$-Analogues in combinatorics: matroids, designs and codes'' at Applications of Computer Algebra (ACA 2022) for their fruitful discussions. Furthermore, we thank the reviewer for their suggestions that improved the exposition of this paper.
\smallskip

M. Ceria has been supported by the project ``Schemi crittografici e di trasmissione dei dati per database distribuiti e in cloud”
in the program
``Research for Innovation'' (REFIN) - POR Puglia FESR FSE 2014-2020 Codice CUP: D94I20001410008.\\
She has been partially supported by the ``National
Group for Algebraic and Geometric Structures, and their Applications'' (GNSAGA - INdAM).

\bibliographystyle{abbrv}
\bibliography{biblio} 

\end{document}